\newfont{\blb}{msbm10 scaled\magstep1}
\newtheorem{lem}{Lemma}[section]
\newtheorem{thm}[lem]{Theorem}
\newtheorem{prop}[lem]{Proposition}
\newtheorem{ques}[lem]{Question}
\newtheorem{conj}[lem]{Conjecture}
\title{Hamiltonian chordal graphs are not cycle extendable}
\author{Manuel Lafond\footnotemark[1]
\and 
Ben Seamone\footnotemark[2]\ \footnotemark[3]
}
\begin{document}
\maketitle

\renewcommand{\thefootnote}{\fnsymbol{footnote}}

\footnotetext[1]{Department d'Informatique et de recherche op\'erationnelle, Universit\'e de Montr\'eal, Montreal, QC, Canada, \url{lafonman@iro.umontreal.ca}}
\footnotetext[2]{Department d'Informatique et de recherche op\'erationnelle, Universit\'e de Montr\'eal, Montreal, QC, Canada, \url{seamone@iro.umontreal.ca}}
\footnotetext[3]{This work was supported by the Natural Sciences and Engineering Research Council of Canada.}

\renewcommand{\thefootnote}{\arabic{footnote}}

\begin{abstract}
In 1990, Hendry conjectured that every Hamiltonian chordal graph is cycle extendable; that is, the vertices of any non-Hamiltonian cycle are contained in a cycle of length one greater.  We disprove this conjecture by constructing counterexamples on $n$ vertices for any $n \geq 15$.  Furthermore, we show that there exist counterexamples where
the ratio of the length of a non-extendable cycle to the total number of vertices can be made arbitrarily small.  We then consider cycle extendability in Hamiltonian chordal graphs where certain induced subgraphs are forbidden, notably $P_n$ and the bull.
\end{abstract}

%


\pagestyle{myheadings}
\thispagestyle{plain}
\markboth{M. Lafond and B. Seamone}{Hamiltonian chordal graphs are not cycle extendable}

\section{Introduction}

All graphs considered here are simple, finite, and undirected.  A graph is {\em Hamiltonian} if it has a cycle containing all vertices; such a cycle is a {\em Hamiltonian cycle}. 
A graph $G$ on $n$ vertices is {\em pancyclic} if $G$ contains a cycle of length $m$ for every integer $3 \leq m \leq n$.  
Let $C$ and $C'$ be cycles in $G$ of length $m$ and $m+1$, respectively, such that $V(C') \setminus V(C) = \{v\}$.  We say that $C'$ is an {\em extension} of $C$ and that $C$ is {\em extendable} (or, $C$ {\em extends through} $v$ to $C'$).  If every non-Hamiltonian cycle of $G$ is extendable then $G$ is {\em cycle extendable}.  If, in addition, every vertex of $G$ is contained in a triangle, then $G$ is {\em fully cycle extendable}.  The study of pancyclic graphs was initiated by Bondy \cite{B71}, who recognized that most of the sufficient conditions for Hamiltonicity known at the time in fact implied a more complex cycle structure.  Hendry \cite{H90} introduced the concept of cycle extendability, and proved that many known sufficient conditions for a graph to be pancyclic in fact were sufficient for a graph to be (fully) cycle extendable.

Given a graph $G$ and a set of vertices $U \subseteq V(G)$, we denote by $G[U]$ the subgraph obtained by deleting from $G$ all vertices except those in $U$; $G[U]$ is the subgraph {\em induced} by $U$, and a subgraph of $G$ is an {\em induced subgraph} if it is induced by some $U \subseteq V(G)$.
A graph is {\em chordal} if it contains no induced cycles of length $4$ or greater.  It is not hard to show that every Hamiltonian chordal graph is pancyclic (see Proposition \ref{reduce}), however the question of whether not every Hamiltonian chordal graph is cycle extendable has remained open since 1990:

\begin{conj}[Hendry's Conjecture]\cite{H90}
If $G$ is a Hamiltonian chordal graph, then $G$ is fully cycle extendable.
\end{conj}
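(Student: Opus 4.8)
Since the final statement is Hendry's conjecture and the abstract already announces it to be false, the object is not to prove it but to refute it: to construct a Hamiltonian chordal graph $G$ together with a non-Hamiltonian cycle $C$ that does not extend. The plan begins with a convenient reformulation. A cycle $C$ extends through a vertex $v \notin V(C)$ precisely when $G[V(C) \cup \{v\}]$ has a spanning cycle; since $v$ has degree exactly $2$ in any such cycle, this happens if and only if $v$ has two neighbours $a, b \in V(C)$ that are joined by a Hamiltonian path of $G[V(C)]$. Thus a non-extendable cycle is one for which \emph{no} external vertex has two of its $C$-neighbours linked by a Hamiltonian path inside $G[V(C)]$. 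The easy special case is that a vertex adjacent to two consecutive vertices of $C$ always allows an extension; the dangerous extensions are those that re-route through chords of $C$, and these are what a counterexample must defeat.

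The construction strategy is to produce a single cycle $C$ that is ``badly attached'' to the remainder of $G$: every external vertex should meet $V(C)$ only in vertices whose pairs fail the Hamiltonian-path test above. To keep $G$ chordal while controlling these attachments, I would assemble it from cliques glued along a clique tree (equivalently, realise it as an interval- or split-type graph), so that chordality is automatic and the adjacencies between $C$ and its satellite vertices are dictated entirely by the gluing pattern.

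Verification then proceeds in three parts. Chordality holds by construction. Hamiltonicity I would establish by exhibiting an explicit Hamiltonian cycle $H$ (necessarily different from $C$) that threads through every satellite vertex; this is where the clique structure is spent. The crux is non-extendability of $C$: for each external $v$ one must rule out \emph{every} pair of its $C$-neighbours being joined by a Hamiltonian path of $G[V(C)]$, which is far stronger than merely forbidding consecutive neighbours once $G[V(C)]$ carries chords. I expect this to be the main obstacle, because Hamiltonicity pushes toward dense connections between $C$ and the rest of $G$, whereas non-extendability pushes toward sparse and awkward ones; the whole difficulty is to balance these opposing demands on one small gadget and to confirm that no chord-assisted rerouting slips through.

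Finally, I would make the example parametric. Adding further clique vertices (and lengthening $H$ accordingly) should realise every order $n \geq 15$ while preserving chordality, Hamiltonicity, and the non-extendable cycle $C$. For the stronger assertion that the ratio $|V(C)|/n$ can be driven to zero, I would instead hold $C$ at a fixed short length and inflate the ambient Hamiltonian structure around this fixed badly-attached cycle, so that $|V(C)|$ stays bounded while $n \to \infty$.
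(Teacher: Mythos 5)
Your proposal correctly reads the situation --- the statement is to be refuted, not proved --- and your scaffolding matches the paper's architecture closely: the reformulation of extendability (an extension through $v$ exists iff two $C$-neighbours of $v$ are joined by a Hamiltonian path of $G[V(C)]$), the use of clique pasting so that chordality is automatic, and both parametric steps at the end (growing cliques to hit every $n \geq 15$, and inflating the graph around a fixed short non-extendable cycle to drive $|V(C)|/|V(G)|$ to zero) are exactly what the paper does in Theorems \ref{sizecounter} and \ref{ratiocounter}. But the proposal stops precisely where the mathematical content begins. Everything hinges on exhibiting one concrete gadget, and you never produce it; you explicitly defer it (``I expect this to be the main obstacle''). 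That obstacle \emph{is} the theorem. The paper's gadget is a specific $10$-vertex chordal graph $H$ (Figure \ref{base}) with five distinguished \emph{heavy} edges, a Hamiltonian cycle $C^*$ through all heavy edges, and an $8$-vertex cycle $C$ missing only two vertices $z_1,z_2$, together with Lemma \ref{heavyextlem}: no extension of $C$ inside $H$ can contain all five heavy edges. The proof of that lemma is a short but genuinely combinatorial elimination argument ($C$ cannot extend through $z_1$, so any extension must use $az_2$ and $gz_2$, which then leaves no usable edge at $f$), and nothing in your outline substitutes for it.

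The second, subtler point your plan misses is \emph{why} the heavy-edge formulation is the right invariant: it is stable under clique pasting. When a clique is pasted onto each heavy edge, the cycle $D$ in the resulting graph $G$ threads through all pasted vertices, so any extension of $D$ (necessarily through $z_1$ or $z_2$) must traverse each pasted clique's interior, and collapsing those traversals back to the heavy edges yields an extension of $C$ in $H$ that uses \emph{every} heavy edge --- contradicting the lemma. This is what lets one non-extendability proof on a fixed $10$-vertex graph certify non-extendability for an infinite family, including the ratio construction (the clique pasted onto $z_1z_2$ sits entirely off $D$ and changes nothing). Your plan, as stated, would instead require re-verifying the ``no pair of $C$-neighbours of any external vertex is joined by a Hamiltonian path in $G[V(C)]$'' condition separately for each graph in the family, with no mechanism for doing so uniformly. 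So the approach is aimed in the right direction, but absent the gadget and a pasting-stable non-extendability lemma, no counterexample has been constructed and the conjecture has not been refuted.
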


In this paper, we settle Hendry's Conjecture in the negative.
In Section \ref{counter}, we show that (a) for any $n \geq 15$ there exists a counterexample to Hendry's Conjecture on $n$ vertices and (b) for every real number $\alpha > 0$ there exists a counterexample $G$ with a non-extendable cycle $C$ such that $|V(C)| < \alpha|V(G)|$.
The question then remains: for which subclasses of the class of chordal graphs is Hendry's Conjecture true?  In Section \ref{subclasses}, we verify the conjecture for some particular chordal graph classes based on forbidden induced subgraphs, and suggest some avenues for further research in Section \ref{open}.

\section{Counterexamples to Hendry's Conjecture}\label{counter}

We continue with some necessary definitions and properties of chordal graphs. 
A set of vertices $X \subseteq V(G)$ which induces a complete subgraph of $G$ is a {\em clique}.
The neighbourhood of a vertex $v \in V(G)$ is the set of vertices to which $v$ is adjacent, which is denoted $N_G(v)$ (or $N(v)$ if the graph in question is clear from context).
A vertex $v \in V(G)$ is called {\em simplicial} if $N_G(v)$ is a clique.  
A {\em perfect elimination ordering} of a graph $G$ is an ordering of $V(G)$, say $v_1 \prec v_2 \prec v_3 \prec \ldots \prec v_n$, such that $v_i$ is simplicial in the graph $G[\{v_i, \ldots, v_n\}]$
for all $i \in \{1,2,\ldots,n\}$.
A {\em vertex cut} of a graph $G$ is a set $X \subset V(G)$ such that $G - X$ is a disconnected graph.
Let $G$ and $H$ be two graphs for which $V(G) \cap V(H)$ is a clique.  We call the graph with vertex set $V(G) \cup V(H)$ and edge set $E(G) \cup E(H)$ the {\em clique sum} of $G$ and $H$; this is also called a {\em clique pasting} of $G$ and $H$.

For a graph $G$, the following statements are well known to be equivalent:
	\begin{compactitem}
	\item $G$ is chordal.
	\item Every minimal vertex cut of every induced subgraph of $G$ is a clique \cite{D61}.
	\item $G$ admits a perfect elimination ordering \cite{D61, FG65}.
	\end{compactitem}
It easily follows that $G$ is chordal if and only if $G$ can obtained from two chordal graphs $G_1$ and $G_2$, with $V(G_1) \subsetneq V(G)$ and $V(G_2) \subsetneq V(G)$, via clique pasting.

We build our counterexamples to Hendry's Conjecture using the graph $H$ given in Figure \ref{base}.

\begin{figure}[h!]
\begin{center}
\scalebox{0.55}{
\begin{tikzpicture}
\clip(3.2,-8) rectangle (15.21,3.8);
\draw (4,-2)-- (12.5,1.5);
\draw (12.5,-5.5)-- (9,-7);
\draw (9,3)-- (5.5,1.5);
\draw (4,-2)-- (14,-2);
\draw (4,-2)-- (12.5,-5.5);
\draw (4,-2)-- (9,-7);
\draw (4,-2)-- (9,-3);
\draw (4,-2)-- (9,3);
\draw [line width=6pt] (4,-2)-- (5.5,1.5);
\draw (4,-2)-- (5.5,-5.5);
\draw (4,-2)-- (7.25,-6.25);
\draw [line width=6pt] (12.5,1.5)-- (14,-2);
\draw (12.5,1.5)-- (9,-3);
\draw (12.5,1.5)-- (9,3);
\draw [line width=6pt] (14,-2)-- (12.5,-5.5);
\draw (14,-2)-- (9,-7);
\draw (14,-2)-- (9,-3);
\draw (14,-2)-- (9,3);
\draw (14,-2)-- (5.5,1.5);
\draw (5.5,-5.5)-- (14,-2);
\draw (7.25,-6.25)-- (14,-2);
\draw [line width=6pt] (9,-7)-- (9,-3);
\draw (7.25,-6.25)-- (9,-7);
\draw [line width=6pt] (9,-3)-- (9,3);
\draw (9,-3)-- (5.5,1.5);
\draw (9,-3)-- (7.25,-6.25);
\draw (9,-3)-- (5.5,-5.5);
\draw (5.5,1.5)-- (5.5,-5.5);
\draw (5.5,-5.5)-- (7.25,-6.25);
\fill [color=black] (4,-2) circle (6.0pt);
\draw[color=black] (3.5,-2) node {\LARGE $a$};
\fill [color=black] (12.5,1.5) circle (6.0pt);
\draw[color=black] (12.8,2) node {\LARGE $d$};
\fill [color=black] (14,-2) circle (6.0pt);
\draw[color=black] (14.5,-2) node {\LARGE $e$};
\fill [color=black] (9,-7) circle (6.0pt);
\draw[color=black] (9,-7.5) node {\LARGE $g$};
\fill [color=black] (9,-3) circle (6.0pt);
\draw[color=black] (9.5,-3.3) node {\LARGE $h$};
\fill [color=black] (9,3) circle (6.0pt);
\draw[color=black] (9,3.5) node {\LARGE $c$};
\fill [color=black] (5.5,1.5) circle (6.0pt);
\draw[color=black] (5.2,2) node {\LARGE $b$};
\fill [color=black] (12.5,-5.5) circle (6.0pt);
\draw[color=black] (12.8,-6) node {\LARGE $f$};
\fill [color=black] (5.5,-5.5) circle (6.0pt);
\draw[color=black] (5.2,-6) node {\LARGE $z_1$};
\fill [color=black] (7.25,-6.25) circle (6.0pt);
\draw[color=black] (7.25,-6.9) node {\LARGE $z_2$};
\end{tikzpicture}
}
\caption{The base graph $H$}\label{base}
\end{center}
\end{figure}
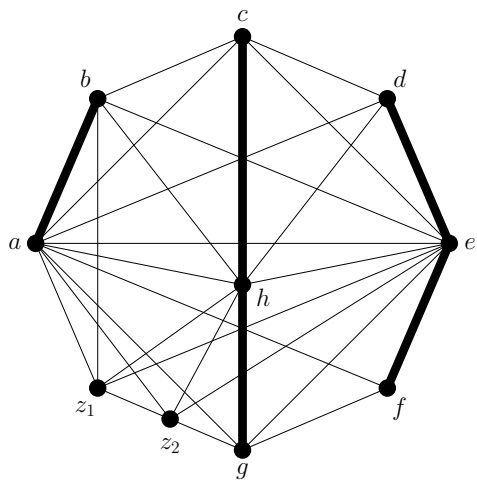

Since $f \prec g \prec z_2 \prec z_1 \prec b \prec a \prec c \prec d \prec e \prec h$
 is a perfect elimination ordering of $H$, $H$ is a chordal graph.
Call the edges $ab, de, ef, ch,$ and $gh$ {\em heavy}; these edges are highlighted in Figure \ref{base}.  We define the following two cycles of $H$:
\begin{align*}
C^* &= abz_1z_2ghcdefa \\
C &= abchgfeda
\end{align*}

Note that $C$ and $C^*$ each contain every heavy edge of $H$.  Furthermore, $C^*$ is a Hamiltonian cycle of $H$ and $C$ spans every vertex of $H$ except $z_1$ and $z_2$.

\begin{lem}\label{heavyextlem}
No extension of $C$ in $H$ contains every heavy edge.
\end{lem}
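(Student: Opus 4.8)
The plan is to argue by contradiction. Suppose $C'$ is an extension of $C$ that contains all five heavy edges $ab, de, ef, ch, gh$. Since $V(C) = \{a,b,c,d,e,f,g,h\}$ and $V(H) = V(C) \cup \{z_1,z_2\}$, any extension $C'$ must be a cycle on $V(C) \cup \{z\}$ for exactly one added vertex $z \in \{z_1,z_2\}$. The whole argument will reduce to a short case analysis on which of $z_1,z_2$ is added, once we first extract the rigid structure that the heavy edges force.

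The crucial first step is to notice that two vertices are each incident to \emph{two} heavy edges: $e$ lies on both $de$ and $ef$, and $h$ lies on both $ch$ and $gh$. Because every vertex of a cycle has degree exactly $2$, if $C'$ uses all heavy edges then the two edges of $C'$ at $e$ must be precisely $de$ and $ef$, and the two edges at $h$ must be precisely $ch$ and $gh$. Hence $C'$ necessarily contains the forced paths $d\,e\,f$ and $c\,h\,g$ together with the single edge $ab$, and --- crucially --- neither $e$ nor $h$ can be adjacent to the new vertex $z$, since their degrees are already saturated. I would record the resulting ``free'' endpoints that still need one more incident edge, namely $a,b,d,f$ and $c,g$, while $z$ needs two.

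Next I would read off $z$'s possible neighbours from $H$. For $z = z_1$, whose neighbourhood is $\{a,b,e,h,z_2\}$, discarding $e,h$ (saturated) and $z_2 \notin V(C')$ leaves only $a$ and $b$; attaching $z_1$ to both closes $ab$ into the triangle $a\,b\,z_1$, which cannot sit inside a $9$-cycle. For $z = z_2$, with neighbourhood $\{a,e,g,h,z_1\}$, the same reasoning forces $z_2$ adjacent to $a$ and $g$; splicing this into the forced paths produces one long path with endpoints $b$ and $c$, namely $b\,a\,z_2\,g\,h\,c$, alongside the leftover path $d\,e\,f$. To close these two paths into a single cycle one must add a pair of independent edges matching $\{b,c\}$ to $\{d,f\}$ (either $\{bd,cf\}$ or $\{bf,cd\}$), but a quick check of $H$ shows that the only such edge present is $cd$, so no valid closure exists. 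Both cases yield a contradiction, completing the proof.

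The main obstacle I anticipate is purely careful bookkeeping rather than any deep idea: I must be sure that the ``forced paths plus isolated vertex'' genuinely can only be completed in the few ways claimed, and that I have correctly identified every neighbour of $z_1$ and $z_2$ so that no alternative attachment is overlooked. Once the saturation of $e$ and $h$ is established, everything reduces to a finite degree/endpoint count, so the argument should be short; the only real care lies in ruling out each way of reconnecting the forced segments into a single $9$-cycle.
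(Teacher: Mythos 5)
Your proof is correct and follows essentially the same route as the paper's: both arguments first saturate $e$ and $h$ with their two heavy edges, rule out extending through $z_1$ (the forced triangle $abz_1$), deduce that the extension must use $az_2$ and $gz_2$, and then reach a contradiction from the remaining non-adjacencies. Your endpoint-matching finish (no matching of $\{b,c\}$ to $\{d,f\}$ exists in $H$) is just a slight repackaging of the paper's observation that $f$ is then left with only the single available edge $ef$.
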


\begin{proof}
Suppose, to the contrary, such an extension exists.  We may remove from consideration any edge incident to $e$ or $h$ that is not heavy, as well as the edge $z_1z_2$.  The remaining available edges for our desired extension are shown in Figure \ref{heavyextension}.
Since $C$ cannot extend through $z_1$, any extension must contain the edges $az_2$ and $gz_2$.  We may now remove from consideration every other edge incident to $a$ or $g$.  This leaves no remaining edges incident to $f$ to include in an extension of $C$, and hence no such extension exists. \qquad
\end{proof}

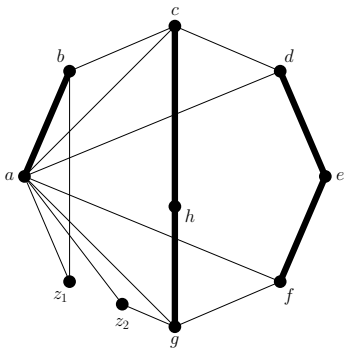
\begin{figure}[h!]
\begin{center}
\scalebox{0.4}{
\begin{tikzpicture}
\clip(3.2,-7.8) rectangle (15.21,3.8);
\draw (4,-2)-- (12.5,1.5);
\draw (12.5,-5.5)-- (9,-7);
\draw (9,3)-- (5.5,1.5);
\draw (4,-2)-- (12.5,-5.5);
\draw (4,-2)-- (9,-7);
\draw (4,-2)-- (9,3);
\draw [line width=6pt] (4,-2)-- (5.5,1.5);
\draw (4,-2)-- (5.5,-5.5);
\draw (4,-2)-- (7.25,-6.25);
\draw [line width=6pt] (12.5,1.5)-- (14,-2);
\draw (12.5,1.5)-- (9,3);
\draw [line width=6pt] (14,-2)-- (12.5,-5.5);
\draw [line width=6pt] (9,-7)-- (9,-3);
\draw (7.25,-6.25)-- (9,-7);
\draw [line width=6pt] (9,-3)-- (9,3);
\draw (5.5,1.5)-- (5.5,-5.5);
\fill [color=black] (4,-2) circle (6.0pt);
\draw[color=black] (3.5,-2) node {\LARGE $a$};
\fill [color=black] (12.5,1.5) circle (6.0pt);
\draw[color=black] (12.8,2) node {\LARGE $d$};
\fill [color=black] (14,-2) circle (6.0pt);
\draw[color=black] (14.5,-2) node {\LARGE $e$};
\fill [color=black] (9,-7) circle (6.0pt);
\draw[color=black] (9,-7.5) node {\LARGE $g$};
\fill [color=black] (9,-3) circle (6.0pt);
\draw[color=black] (9.5,-3.3) node {\LARGE $h$};
\fill [color=black] (9,3) circle (6.0pt);
\draw[color=black] (9,3.5) node {\LARGE $c$};
\fill [color=black] (5.5,1.5) circle (6.0pt);
\draw[color=black] (5.2,2) node {\LARGE $b$};
\fill [color=black] (12.5,-5.5) circle (6.0pt);
\draw[color=black] (12.8,-6) node {\LARGE $f$};
\fill [color=black] (5.5,-5.5) circle (6.0pt);
\draw[color=black] (5.2,-6) node {\LARGE $z_1$};
\fill [color=black] (7.25,-6.25) circle (6.0pt);
\draw[color=black] (7.25,-6.9) node {\LARGE $z_2$};
\end{tikzpicture}
}
\caption{Available edges of $H$ for an extension of $C$ that contains all heavy edges}\label{heavyextension}
\end{center}
\end{figure}

\begin{thm}\label{sizecounter}
For any $n \geq 15$, there exists a counterexample to Hendry's Conjecture on $n$ vertices.
\end{thm}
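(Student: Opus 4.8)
The plan is to convert the obstruction isolated in Lemma~\ref{heavyextlem} into a genuine non-extendable cycle by attaching \emph{forcing gadgets} to the heavy edges, so that in the enlarged graph no extension can afford to omit a heavy edge. The uniform idea is: the single edge $ab$ absorbs all the padding needed to reach a given $n$, while the other heavy edges are each protected by one degree-two vertex.

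\textbf{Construction.} For $n \ge 15$ I would build $G_n$ from $H$ as follows. To each of the heavy edges $de$, $ef$, $ch$, $gh$ I add a single new vertex adjacent only to the two endpoints of that edge; to the heavy edge $ab$ I add $t := n-14 \ge 1$ new vertices $w_1,\dots,w_t$ which, together with $a$ and $b$, form a clique. Since $H$ has $10$ vertices, $G_n$ has $10+4+t=n$ vertices. Every added vertex is simplicial: the neighbourhood of each of the four isolated gadget vertices is a single heavy edge, and the neighbourhood of each $w_j$ is $\{a,b\}\cup(\{w_1,\dots,w_t\}\setminus\{w_j\})$, a clique. Prepending the added vertices to a perfect elimination ordering of $H$ therefore shows $G_n$ is chordal. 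To see that $G_n$ is Hamiltonian, recall that the Hamiltonian cycle $C^*$ of $H$ uses all five heavy edges; replacing $ab$ by the path $a\,w_1\cdots w_t\,b$ and each other heavy edge by its detour through the corresponding new vertex yields a Hamiltonian cycle of $G_n$.

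\textbf{The candidate non-extendable cycle.} Let $\widehat{C}$ be obtained from $C$ in the same way: replace $ab$ by $a\,w_1\cdots w_t\,b$ and each other heavy edge by its two-edge detour. Since $C$ likewise contains all five heavy edges, $\widehat{C}$ is a cycle of $G_n$ containing every vertex except $z_1$ and $z_2$; in particular it is non-Hamiltonian and contains every added gadget vertex. Now suppose $\widehat{C}$ extends to a cycle $\widehat{C}'$; its new vertex must be $z_1$ or $z_2$, say $z_1$ by symmetry, so $\widehat{C}'$ still contains all gadget vertices. The crucial point is that the gadget vertices have all their neighbours inside their own gadget together with its two ``boundary'' vertices, so $\{a,b\}$ is a vertex cut separating $w_1,\dots,w_t$ from the rest of $G_n$, and each isolated gadget vertex (having degree two) must sit between its two endpoints on any cycle. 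Since $\widehat{C}'$ also visits vertices outside the gadgets, examining the two arcs into which $a$ and $b$ cut $\widehat{C}'$ forces $w_1,\dots,w_t$ to occur consecutively as a single $a$--$b$ path. Contracting each gadget path back to its heavy edge then turns $\widehat{C}'$ into a cycle of $H$ on $\{a,\dots,h,z_1\}$ of length $9$ that contains all five heavy edges. This is an extension of $C$ in $H$ containing every heavy edge, contradicting Lemma~\ref{heavyextlem}. Hence $\widehat{C}$ is not extendable, and $G_n$ is the desired counterexample on $n$ vertices.

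I expect the cut-and-contraction step to be the only real obstacle: one must argue carefully that the $ab$-gadget cannot be threaded into $\widehat{C}'$ except as a single contiguous detour (the degree-two gadgets are automatic), since it is exactly this that legitimizes contracting each gadget back to its heavy edge and lets Lemma~\ref{heavyextlem} deliver the contradiction. The chordality and Hamiltonicity verifications are routine, and scaling the single $ab$-gadget by taking $t=n-14$ is what produces every value $n \ge 15$ from one uniform construction.
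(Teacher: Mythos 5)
Your proposal is correct and takes essentially the same approach as the paper: paste cliques onto the heavy edges of $H$ (yours uses triangles on four of them and a clique of size $n-12$ on $ab$), lift $C^*$ and $C$ to cycles of the enlarged graph, and derive non-extendability by contracting each gadget detour back to its heavy edge so that Lemma~\ref{heavyextlem} yields a contradiction. Your cut-and-contraction argument simply fills in, correctly, the step the paper's proof leaves implicit.
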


\begin{proof}
Let $G$ be a graph obtained from $H$ by pasting a clique onto each heavy edge of $H$ so that $|V(G)| = n \geq 15$.  Since $G$ is obtained from $H$ and a disjoint set of complete graphs by clique pasting, $G$ is chordal.  Let $D^*$ and $D$ be cycles of $G$ obtained from $C^*$ and $C$, respectively, by replacing each heavy edge $xy$ with a Hamiltonian $xy$-path through the clique which was pasted onto $xy$ to obtain $G$.  We see that $D^*$ is a Hamiltonian cycle of $G$ and that $D$ is a cycle that spans every vertex of $G$ except $z_1$ and $z_2$.  Furthermore, $D$ cannot be extended in $G$, otherwise $C$ could have been extended in $H$ using every heavy edge, a contradiction of Lemma \ref{heavyextlem}. \qquad
\end{proof}

For any fixed counterexample on $n$ vertices constructed in the proof of Theorem \ref{sizecounter}, consider the graph obtained by pasting a clique of size $k$ onto the edge $z_1z_2$.  Such a graph is still Hamiltonian and a cycle $D$ as given in the proof of Theorem \ref{sizecounter} cannot be extended.  Since we have a cycle of length $n-2$ that cannot be extended in a graph on $n+k-2$ vertices, we obtain the following:

\begin{thm}\label{ratiocounter}
For any real number $\alpha>0$, there exists a Hamiltonian chordal graph $G$ with a non-extendable cycle $C$ satisfying $|V(C)|< \alpha|V(G)|$.
\end{thm}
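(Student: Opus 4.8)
The plan is to leverage the construction already described in the paragraph preceding the statement: fix some $n\ge 15$ together with a corresponding $n$-vertex counterexample $G$ produced in Theorem \ref{sizecounter} and its non-extendable cycle $D$, which satisfies $|V(D)|=n-2$. For an integer $k\ge 3$ I would form the graph $G_k$ by pasting a clique of size $k$ onto the edge $z_1z_2$ (this is a genuine edge of $H$, and it survives in $G$ since it is not heavy). The claim is that $G_k$ is a Hamiltonian chordal graph in which $D$ is still non-extendable; granting this, $D$ and $G_k$ play the roles of $C$ and $G$ in the statement, and it only remains to choose $k$ so that $\tfrac{n-2}{n+k-2}<\alpha$. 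The vertex count is immediate, since the pasted clique shares only $z_1,z_2$ with $G$: we get $|V(G_k)|=n+(k-2)$.

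Chordality and Hamiltonicity are quick. Because $G_k$ is a clique sum of the chordal graph $G$ with a complete graph, it is chordal by the clique-pasting characterization recalled in Section \ref{counter}. For Hamiltonicity, note that the Hamiltonian cycle $D^*$ of $G$ traverses the edge $z_1z_2$: it is built from $C^*=abz_1z_2ghcdefa$, and $z_1z_2$ is not heavy, so it is not rerouted in passing from $H$ to $G$. Replacing this single edge by a Hamiltonian $z_1z_2$-path through the new clique yields a Hamiltonian cycle of $G_k$.

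The crux, and the step I would treat most carefully, is that $D$ remains non-extendable. The vertices of $G_k$ missing from $D$ are exactly $z_1$, $z_2$, and the $k-2$ interior vertices of the new clique. Any extension of $D$ adds a single vertex $v$; if $v$ were an interior clique vertex, then all of its neighbours (the remaining clique vertices together with $z_1,z_2$) would lie outside $V(D)\cup\{v\}$, so $v$ could not acquire the two neighbours on the extending cycle that a degree-$2$ cycle vertex requires. Hence any extension adds $v\in\{z_1,z_2\}$ and uses only vertices of $G$; since pasting the clique onto $z_1z_2$ introduces no new edge between two vertices of $G$, such an extension would be a cycle of $G$, i.e.\ an extension of $D$ in $G$. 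This contradicts the non-extendability of $D$ in $G$ established in Theorem \ref{sizecounter} (which itself rests on Lemma \ref{heavyextlem}).

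Finally I would close out the estimate. We then have a non-extendable cycle with $|V(D)|=n-2$ inside the graph $G_k$ on $n+k-2$ vertices, so
\[
\frac{|V(D)|}{|V(G_k)|}=\frac{n-2}{\,n+k-2\,}\xrightarrow[k\to\infty]{}0 .
\]
Hence for any $\alpha>0$ it suffices to take any integer $k>(n-2)\bigl(\tfrac1\alpha-1\bigr)$, which forces $|V(D)|<\alpha\,|V(G_k)|$. I expect the only genuine content to be the non-extendability argument of the third paragraph; everything else is bookkeeping, and even that argument is short once one observes that the interior clique vertices have no neighbour on $D$.
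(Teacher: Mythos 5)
Your proposal is correct and follows exactly the paper's own route: paste a clique of size $k$ onto the edge $z_1z_2$ of a counterexample from Theorem \ref{sizecounter}, observe that the resulting graph is a Hamiltonian chordal graph on $n+k-2$ vertices in which the cycle $D$ of length $n-2$ is still non-extendable, and let $k$ grow. The paper states the preservation of Hamiltonicity and non-extendability without proof; your third paragraph simply fills in those details, correctly.
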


To conclude the section, we note that the construction given in the proof of Theorem \ref{sizecounter} does not necessarily require $5$ cliques to be pasted onto the heavy edges of $H$ -- any set of $5$ Hamiltonian chordal graphs $\{G_1, G_2, G_3, G_4, G_5\}$ will suffice, where the edge of $G_i$ pasted onto a heavy edge of $H$ can be chosen to be any edge from any Hamiltonian cycle in $G_i$.


\section{Hamiltonian chordal graphs which are fully cycle extendable}\label{subclasses}

Even though Hendry's Conjecture is not true in general, it is still interesting to consider sufficient conditions for a chordal graph to be fully cycle extendable.  
A graph is {\em $H$-free} if it contains no induced subgraph isomorphic to $H$, and it is {\em $\mathcal{H}$-free} for a set of graphs $\mathcal{H}$ if it is $H$-free for every $H \in \mathcal{H}$.
The remainder of this paper is concerned with graphs characterized by forbidden induced subgraphs.

Chordal graphs are one obvious example of a graph class characterized by forbidden induced subgraphs; they are by definition $\{C_4, C_5, C_6, \ldots\}$-free.
A {\em strongly chordal graph} is defined to be a chordal graph in which even cycle of length at least $6$ has a chord that connects vertices at an odd distance from one another along the cycle.  
Strongly chordal graphs can also be characterized by forbidden induced subgraphs.
A {\em $k$-sun} is a chordal graph $G$ whose vertices can be partitioned into two sets $X = \{x_1, x_2, \ldots, x_k\}$ and $Y = \{y_1, y_2, \ldots, y_k\}$ such that $x_i$ is adjacent only to $y_i$ and $y_{i+1}$ in $G$ (subscripts taken modulo $k$).  A graph is a {\em sun} if it is a $k$-sun for some $k$.
Farber \cite{F83} showed that a graph is strongly chordal if and only if it is chordal and sun-free.

We now summarize the classes for which Hendry's Conjecture is known to hold.
A Hamiltonian chordal graph is fully cycle extendable if it is also 
\begin{compactitem}
\item planar \cite{J02},
\item a spider intersection graph (the intersection graph of subtrees of a subdivided star) \cite{ABS13},
\item strongly chordal and $(K_{1,4}+e)$-free \cite{AS06}, or
\item strongly chordal and hourglass-free \cite{AS06}\label{strong}. 
\end{compactitem}
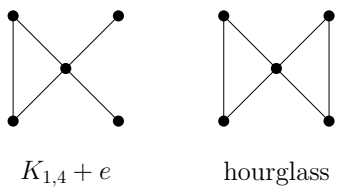
\begin{figure}[h!]
\begin{center}
\scalebox{0.7}{
\begin{tikzpicture}
\clip(-0.5,-1.4) rectangle (6.5,2.5);
\draw (2,2) -- (0,0) -- (0,2) -- (2,0);
\draw (6,2) -- (4,0) -- (4,2) -- (6,0) -- (6,2);

\begin{scriptsize}
\fill [color=black] (0,0) circle (3pt);
\fill [color=black] (2,0) circle (3pt);
\fill [color=black] (2,2) circle (3pt);
\fill [color=black] (0,2) circle (3pt);
\fill [color=black] (1,1) circle (3pt);

\fill [color=black] (4,0) circle (3pt);
\fill [color=black] (6,0) circle (3pt);
\fill [color=black] (6,2) circle (3pt);
\fill [color=black] (4,2) circle (3pt);
\fill [color=black] (5,1) circle (3pt);

\end{scriptsize}
\draw (1, -1) node[anchor=center] {\Large $K_{1,4}+e$};
\draw (5, -1) node[anchor=center] {\Large hourglass};
\end{tikzpicture}
}
\end{center}
\vspace{-0.2in} \caption{$K_{1,4} + e$ and the hourglass}\label{5verts}
\end{figure}
The result on spider intersection graphs generalizes previous results on interval graphs \cite{AS06,CFGJ06} and split graphs \cite{AS06}. 

One can obtain other classes of graphs for which Hendry's Conjecture holds by looking at results on locally connected graphs.  A graph $G$ is {\em locally connected} if $N(v)$ induces a connected subgraph of $G$ for every $v \in V(G)$.

\begin{prop}\label{2conntolocconn}
A connected chordal graph is $2$-connected if and only if it is locally connected.
\end{prop}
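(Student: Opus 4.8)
The plan is to prove the two implications separately, observing that one direction is elementary while the other is where chordality does all the work. For the direction that does \emph{not} require chordality, namely that a connected, locally connected graph is $2$-connected, I would argue by contrapositive. Suppose $G$ is connected but has a cut vertex $v$, so that $G - v$ has at least two components. Since $G$ is connected, $v$ has a neighbour in each such component, so $N(v)$ contains vertices $x$ and $y$ lying in distinct components of $G - v$. Any path inside $G[N(v)]$ avoids $v$ and hence lives entirely in $G - v$; as $x$ and $y$ lie in different components of $G - v$, no such path joins them. Thus $G[N(v)]$ is disconnected and $G$ is not locally connected.

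For the converse, that a $2$-connected chordal graph $G$ is locally connected, I would suppose for contradiction that some vertex $v$ has $G[N(v)]$ disconnected, and then manufacture a long induced cycle. Pick neighbours of $v$ lying in two distinct components of $G[N(v)]$. Because $G$ is $2$-connected, $G - v$ is connected, so there is a path in $G - v$ between these two neighbours; among all paths in $G - v$ whose endpoints lie in distinct components of $G[N(v)]$, let $P = u_0 u_1 \cdots u_k$ be one of minimum length. The choice of $P$ as a \emph{globally} shortest such path is the crux of the argument, and I would extract two facts from its minimality.

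First, no internal vertex $u_i$ (for $0 < i < k$) lies in $N(v)$: if it did, then, since $u_0$ and $u_k$ lie in different components of $G[N(v)]$, the vertex $u_i$ would be in a different component from at least one of them, and the corresponding subpath of $P$ would be a strictly shorter path joining neighbours of $v$ in distinct components, contradicting minimality. Second, $P$ is chordless, since being of minimum length it is in particular a shortest $u_0 u_k$-path, and a shortest path can have no chord. Moreover $u_0$ and $u_k$ are non-adjacent (they lie in different components of $G[N(v)]$), so $k \geq 2$. Consequently $C = v\, u_0 u_1 \cdots u_k\, v$ is a cycle of length $k + 2 \geq 4$ whose only candidate chords are edges within $P$ (none exist, as $P$ is chordless) and edges from $v$ to an internal $u_i$ (none exist, by the first fact). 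Hence $C$ is an induced cycle of length at least $4$, contradicting that $G$ is chordal and completing the proof.

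I expect the main obstacle to be the bookkeeping in the converse: one must choose $P$ to be shortest over \emph{all} admissible pairs of endpoints, not merely a shortest path for one fixed pair $x, y$, since it is exactly this global minimality that forces the interior of $P$ off $N(v)$ and thereby guarantees the cycle is induced. A careless choice of path could leave a chord from $v$ to an interior vertex, destroying the contradiction.
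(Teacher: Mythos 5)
Your proof is correct, and its substantive half is essentially the argument the paper gives: for the direction where chordality matters, both proofs take a minimum-length path avoiding $v$, use minimality to force the interior off $N(v)$ and to exclude chords, and close the path up through $v$ to obtain an induced cycle of length at least $4$, contradicting chordality. The differences are organizational rather than conceptual. The paper fixes an arbitrary pair $x,y \in N(v)$, takes a shortest $xy$-path avoiding $v$, and extracts from it a segment $Q$ whose ends lie in $N(v)$ and whose interior does not, so that $Q + avb$ is the offending induced cycle; you instead start from an assumed disconnection of $G[N(v)]$ and minimize globally over all paths in $G-v$ joining distinct components of $G[N(v)]$, which makes the entire path play the role of $Q$. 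Both devices achieve the same end, and your remark that the global minimization is what kills potential chords from $v$ is exactly the right point of care. The other genuine difference is that the paper simply cites Chartrand and Pippert for the implication that connected plus locally connected gives $2$-connected, whereas you prove it from scratch via the contrapositive cut-vertex argument; your proof is elementary, self-contained, and (as you implicitly note) requires no chordality at all, so this direction buys you independence from the literature at essentially no cost in length.
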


\begin{proof}
Chartrand and Pippert \cite{CP74} proved that every connected and  locally connected chordal graph is $2$-connected, so we need only consider the ``only if'' portion of the statement.
Suppose $G$ is a $2$-connected chordal graph and let $x,y \in N(v)$ for some $v$.  Since $G$ is $2$-connected, there exists a cycle through $xv$ that contains $y$; equivalently, there is an $xy$-path that avoids $v$.  Suppose that every such path has vertices not in $N(v)$.  Let $P$ be a shortest such path, and let $Q$ be a segment of $P$ with ends $a,b$ lying in $N(v)$ and all internal vertices not in $N(v)$.  By minimality of $P$, we have that $Q + avb$ is an induced cycle of length at least $4$, a contradiction. \qquad
\end{proof}

A connected and locally connected graph $G$ is known to be cycle extendable if it also satisfies one of the following conditions: 
\begin{compactitem}
\item $\Delta(G) = 5$ and $\delta(G) \geq 3$ \cite{GOPS11},
\item $G$ is almost claw-free \cite{R94}, and hence claw-free (originally shown in \cite{C81, H90}),
\item $G$ is $\{K_{1,4}, K_{1,4}+e\}$-free with $\delta(G) \geq 3$ \cite{LW09}.
\end{compactitem}
Proposition \ref{2conntolocconn} implies that such graphs are fully cycle extendable if they are chordal and $2$-connected.

In the remainder of this section, we consider more forbidden induced subgraphs which guarantee that a Hamiltonian chordal graph is fully cycle extendable.  
In particular, we show that any Hamiltonian chordal graph which also falls into one of the classes of graphs listed below is fully cycle extendable (see Figure \ref{more5verts} for the graphs in question):
	\begin{compactitem}
	\item $P_5$-free;
	\item $\{\text{bull}, K_{1,5}\}$-free;
	\item $\{\text{bull}, K_2 \vee P_5\}$-free;
	\end{compactitem}
Recall that the {\em join} of two graphs $G$ and $H$, denoted $G \vee H$, is the graph with vertex set $V(G) \cup V(H)$ and edge set $E(G) \cup E(H) \cup \{gh \,:\, g \in V(G), h \in V(H)\}$. \\
	
\begin{figure}[h!]
\begin{center}
\scalebox{0.7}{
\begin{tikzpicture}
\clip(-0.5,-1.4) rectangle (20.5,2.2);

\draw (0,1) -- (4,1);

\draw (6,2) -- (7, 1.3) -- (9, 1.3) -- (10,2);
\draw (7, 1.3) -- (8,0) -- (9, 1.3);

\draw (11,0) -- (13,2);
\draw (12,0) -- (13,2);
\draw (13,0) -- (13,2);
\draw (14,0) -- (13,2);
\draw (15,0) -- (13,2);

\draw (18,0) -- (16,1) -- (18,2);
\draw (18,0) -- (17,1) -- (18,2);
\draw (18,0) -- (18,1) -- (18,2);
\draw (18,0) -- (19,1) -- (18,2);
\draw (18,0) -- (20,1) -- (18,2) -- (18,0);
\draw (16,1) -- (20,1);
\draw (18,2) to [out=300, in=60] (18,0);

\begin{scriptsize}

\fill [color=black] (0,1) circle (3pt);
\fill [color=black] (1,1) circle (3pt);
\fill [color=black] (2,1) circle (3pt);
\fill [color=black] (3,1) circle (3pt);
\fill [color=black] (4,1) circle (3pt);

\fill [color=black] (6,2) circle (3pt);
\fill [color=black] (7,1.3) circle (3pt);
\fill [color=black] (9,1.3) circle (3pt);
\fill [color=black] (10,2) circle (3pt);
\fill [color=black] (8,0) circle (3pt);

\fill [color=black] (11,0) circle (3pt);
\fill [color=black] (12,0) circle (3pt);
\fill [color=black] (13,0) circle (3pt);
\fill [color=black] (14,0) circle (3pt);
\fill [color=black] (15,0) circle (3pt);
\fill [color=black] (13,0) circle (3pt);

\fill [color=black] (16,1) circle (3pt);
\fill [color=black] (17,1) circle (3pt);
\fill [color=black] (18,1) circle (3pt);
\fill [color=black] (19,1) circle (3pt);
\fill [color=black] (20,1) circle (3pt);
\fill [color=black] (18,2) circle (3pt);
\fill [color=black] (18,0) circle (3pt);

\end{scriptsize}

\draw (2, -1) node[anchor=center] {\Large $P_5$};
\draw (8, -1) node[anchor=center] {\Large bull};
\draw (13, -1) node[anchor=center] {\Large $K_{1,5}$};
\draw (18, -1) node[anchor=center] {\Large $K_2 \vee P_5$};

\end{tikzpicture}
}

\end{center}
\caption{Forbidden induced subgraphs considered in Sections \ref{otherext} and \ref{bullfree}}\label{more5verts}
\end{figure}
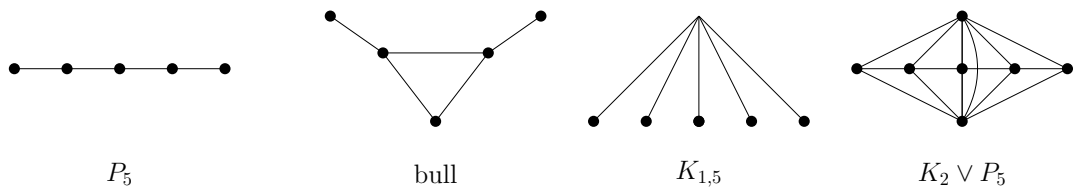

\subsection{$P_5$-free chordal graphs}\label{otherext}

We begin with some technical results on cycles in chordal graphs, particularly as they relate to vertex cuts or cutsets.
For two vertices $u,v \in V(G)$, a {\em $uv$-separator} is a set of vertices $X \subset V(G)$ such that $u$ and $v$ lie in different connected components of $G - X$.
A {\em minimal $uv$-separator} is a $uv$-separator which has no proper subset which is also a $uv$-separator.
A {\em separator} is a $uv$-separator for some $u,v \in V(G)$, and
a {\em minimal separator} is a minimal $uv$-separator for some $u,v \in V(G)$.
A vertex $v$ is {\em complete} to a set $X \subseteq V(G) \setminus \{v\}$ if $v$ is adjacent to every vertex in $X$, and $v$ is {\em anticomplete} to $X$ if it is non-adjacent to every vertex in $X$.  For two disjoint subsets of $V(G)$, say $X$ and $Y$, we say $X$ is complete (anticomplete) to $Y$ if every vertex in $X$ is complete (anticomplete) to $Y$.

As stated earlier, it was shown by Dirac \cite{D61} that if $G$ is a chordal graph and $X \subseteq V(G)$ is a minimal separator (i.e. a minimal vertex cut), then $X$ is a clique.  Furthermore, since every chordal graph has a perfect elimination ordering, it follows that every chordal graph has a simplicial vertex, and if $G$ is not a clique then $G$ contains at least two nonadjacent simplicial vertices.  The following, more general statement, easily follows:

\begin{prop}\label{separator}
If $G$ is a chordal graph and $X$ is a minimal separator, then every connected component of $G - X$ contains a simplicial vertex of $G$.
\end{prop}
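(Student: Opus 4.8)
The plan is to reduce the statement to the known facts about chordal graphs already recorded in the excerpt, namely that every chordal graph has a simplicial vertex and that minimal separators in chordal graphs are cliques. Fix a chordal graph $G$ and a minimal separator $X$, and let $K$ be any connected component of $G-X$. My goal is to produce a vertex of $K$ that is simplicial \emph{in $G$} (not merely in $K$), which is the crucial subtlety: a simplicial vertex of the induced subgraph $G[V(K)]$ need not be simplicial in $G$, since it may have neighbours inside $X$.

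First I would record the structural consequence of minimality. Because $X$ is a minimal separator, say a minimal $uv$-separator, every vertex of $X$ must have a neighbour in the component containing $u$ and a neighbour in the component containing $v$; otherwise one could delete that vertex from $X$ and still separate $u$ from $v$, contradicting minimality. In particular, if $K$ is one of these two components, then every vertex of $X$ has a neighbour in $K$. I expect to use this to control how the neighbourhood of a candidate simplicial vertex interacts with $X$.

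Next I would pass to a well-chosen induced subgraph and apply the simplicial-vertex fact there. The natural choice is $G[V(K)\cup X]$, which is chordal (induced subgraphs of chordal graphs are chordal). Since $X$ is a clique (by Dirac's theorem, quoted in the excerpt) and every vertex of $X$ is adjacent into $K$ while $X$ separates $K$ from the rest of $G$, I would look for a simplicial vertex of $G[V(K)\cup X]$ that lands inside $V(K)$. The cleanest route is to take a perfect elimination ordering of $G[V(K)\cup X]$ and eliminate vertices until only the clique $X$ remains; since $X$ is complete, its vertices can always be deferred to the end of such an ordering, so the first eliminated vertex $w$ lies in $V(K)$ and is simplicial in $G[V(K)\cup X]$. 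It then remains to argue that $w$ is simplicial in all of $G$: because $X$ separates $K$ from $G-(V(K)\cup X)$, the vertex $w\in V(K)$ has all its $G$-neighbours inside $V(K)\cup X$, so $N_G(w)=N_{G[V(K)\cup X]}(w)$, which is a clique. Hence $w$ is a simplicial vertex of $G$ lying in $K$, as required.

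The main obstacle I anticipate is precisely the gap between ``simplicial in the component'' and ``simplicial in $G$,'' and the step that closes it is the observation that a vertex of $K$ has no neighbours outside $V(K)\cup X$. I would need to justify carefully that one can always order a perfect elimination ordering so that the clique $X$ is eliminated last (equivalently, that some simplicial vertex of $G[V(K)\cup X]$ must lie outside the clique $X$), and handle the degenerate possibility that $V(K)\cup X$ is itself a clique, in which case any vertex of $K$ is simplicial in $G$ outright. Everything else is bookkeeping with the definitions of separator and clique sum already set up in the excerpt.
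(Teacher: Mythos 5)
Your proof is correct and takes essentially the approach the paper intends: the paper states this proposition without an explicit proof, remarking that it ``easily follows'' from the two facts quoted just before it (minimal separators in chordal graphs are cliques, and a non-complete chordal graph has two nonadjacent simplicial vertices), and your argument via $G[V(K)\cup X]$ is exactly that filling-in. The one step you flag as still needing justification---that some simplicial vertex of $G[V(K)\cup X]$ lies in $V(K)$---is immediate from those quoted facts: if $G[V(K)\cup X]$ is not complete, its two nonadjacent simplicial vertices cannot both lie in the clique $X$, and the complete case you already handle.
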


As a corollary, we obtain a more general result on (not necessarily minimal) clique separators:

\begin{prop}\label{clique}
If $X$ is a clique separator of a chordal graph $G$, then each connected component of $G - X$ contains a simplicial vertex of $G$.
\end{prop}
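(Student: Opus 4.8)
The plan is to prove the statement directly, working inside the induced subgraph on $A \cup X$, rather than by literally invoking Proposition~\ref{separator}. Fix an arbitrary connected component $A$ of $G - X$ and set $G' = G[A \cup X]$. My first observation is that every vertex $a \in A$ satisfies $N_G(a) \subseteq A \cup X$: since $A$ is a component of $G - X$, any neighbour of $a$ lies either in $A$ or in $X$, because there are no edges from $A$ to any other component of $G - X$. Consequently $N_G(a) = N_{G'}(a)$, and $a$ is simplicial in $G$ if and only if it is simplicial in $G'$. It therefore suffices to exhibit a vertex of $A$ that is simplicial in $G'$.

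Next I would analyse $G'$, which is chordal as an induced subgraph of $G$. If $G'$ is complete, then every vertex is simplicial in $G'$, and since $A \neq \emptyset$ we may take any $a \in A$. Otherwise $G'$ is a chordal graph that is not a clique, so by the fact recalled earlier—that a chordal graph which is not a clique contains at least two non-adjacent simplicial vertices—it has two such vertices. Because $X$ is a clique, no two vertices of $X$ are non-adjacent; hence at least one of these two simplicial vertices of $G'$ must lie in $A$. By the first paragraph that vertex is simplicial in $G$, which completes the argument.

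The step I expect to be the genuine obstacle is not in the argument above but in recognising that this statement does \emph{not} follow immediately from Proposition~\ref{separator} in the obvious way. The natural attempt—pick $u \in A$ and $v$ in another component, choose a minimal $uv$-separator $S \subseteq X$, and apply Proposition~\ref{separator} to the component $C_u \supseteq A$ of $G - S$—fails to isolate $A$: one learns only that $C_u$ contains a simplicial vertex, but $C_u$ may properly contain vertices of $X \setminus S$ and even entire other components of $G - X$, so the guaranteed simplicial vertex can lie outside $A$. Moreover this cannot be repaired by a cleverer choice of $S$, since a component $A$ of $G - X$ need not be a full component of any minimal separator contained in $X$ (for example, when $X$ is a separating edge, $A$ is a single vertex adjacent to both endpoints of $X$, and one endpoint of $X$ also carries a pendant vertex, the only way to realise $A$ as a component of $G - S$ is to delete all of $X$, which is not a minimal separator). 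This is exactly why I would bypass minimal separators and let the clique structure of $X$ do the work directly inside $G[A \cup X]$.
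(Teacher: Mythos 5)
Your proof is correct, but it takes a genuinely different route from the paper's. The paper derives the proposition from Proposition~\ref{separator}: it forms an auxiliary graph $H$ from $G[X \cup V(Q)]$ by adding a new vertex $v$ complete to $X$, extracts a minimal $uv$-separator $Y \subseteq X$, observes that $Q$ is then a full connected component of $H - Y$ (precisely because $v$ keeps $\{v\} \cup (X \setminus Y)$ together in one component), and applies Proposition~\ref{separator} to the chordal graph $H$; the added vertex is exactly the device that repairs the naive reduction whose failure you describe in your last paragraph. You instead bypass minimal separators entirely: working in $G' = G[A \cup X]$, you invoke the fact recalled in the paper just before Proposition~\ref{separator} --- a chordal graph that is not a clique has two nonadjacent simplicial vertices --- and since $X$ is a clique, at least one of them must lie in $A$; your first paragraph correctly justifies transferring simpliciality from $G'$ back to $G$. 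Your argument is shorter and more self-contained; indeed, since every minimal separator of a chordal graph is itself a clique, your argument would also yield Proposition~\ref{separator} as a special case, whereas the paper's proof buys a uniform logical chain in which the clique-separator statement appears as a corollary of the minimal-separator one. Your diagnosis of why the obvious appeal to Proposition~\ref{separator} fails (the component of $G - S$ containing $A$ may strictly contain it) is accurate, and it is precisely the obstruction that the paper's auxiliary vertex is designed to remove.
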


\begin{proof}
Suppose that $Q$ is a connected component of $G - X$.  Let $H$ be the graph obtained from $G[X \cup V(Q)]$ by adding a vertex $v$ and all edges from $v$ to $X$.  For any $u \in V(Q)$, $X$ is a $uv$-separator in $H$, and hence $X$ contains a minimal $uv$-separator, $Y$.  Since $v$ is complete to $X$, it is complete to $X \setminus Y$, and hence $H - Y$ has $Q$ as a connected component (and $H[\{v\} \cup (X \setminus Y)]$ as the other).  By Proposition \ref{separator}, $Q$ contains a simplicial vertex in $H$, say $x$.  However, $H[X \cup V(Q)] = G[X \cup V(Q)]$ and all neighbours of $x$ in $G$ lie in $V(Q)$ or $X$, and thus $x$ is a simplicial vertex in $G$ as well. \qquad
\end{proof}

The following simple proposition implies that every Hamiltonian chordal graph is pancyclic and thus ``cycle reducible''; it is this fact that originally inspired Hendry's Conjecture. 

\begin{prop}\label{reduce}
Let $G$ be a Hamiltonian chordal graph.  If $v$ is a simplicial vertex of $G$, then $G-v$ is Hamiltonian.
\end{prop}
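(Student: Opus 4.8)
The plan is to take a Hamiltonian cycle of $G$ and locally reroute it to skip over the simplicial vertex $v$, using the fact that $v$'s neighbours on the cycle are adjacent (since $N_G(v)$ is a clique). Let $D$ be a Hamiltonian cycle of $G$. The vertex $v$ appears on $D$ between two consecutive vertices, say $x$ and $y$, so that $xv$ and $vy$ are edges of $D$. Because $v$ is simplicial, $N_G(v)$ induces a complete subgraph, and since $x,y \in N_G(v)$ we have that $xy \in E(G)$.

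\textbf{Constructing the reduced cycle.} First I would form the cycle $D'$ by deleting $v$ from $D$ and replacing the path $xvy$ with the single edge $xy$. Explicitly, if $D = x v y \, w_1 w_2 \cdots w_{n-3} x$, then set $D' = x y \, w_1 w_2 \cdots w_{n-3} x$. This is a closed walk on $V(G) \setminus \{v\}$, and since all of its vertices are distinct and each consecutive pair is joined by an edge of $G$ (the only new edge being $xy$, which we have just argued lies in $E(G)$), $D'$ is a cycle of $G$. It visits every vertex of $G$ except $v$, so $D'$ is a Hamiltonian cycle of $G - v$. Hence $G - v$ is Hamiltonian.

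\textbf{Main obstacle.} The only subtlety is the degenerate case in which $v$ has very few neighbours on the cycle, i.e.\ when $n = 3$ and $G$ is a triangle: here removing $v$ leaves a single edge rather than a cycle. I would handle this by noting that the statement is interesting only when $G$ has at least $4$ vertices (a triangle minus a vertex is trivially not Hamiltonian but also has fewer than three vertices, so the claim is vacuous), and in all cases $n \geq 4$ the replacement edge $xy$ together with the rest of $D$ genuinely forms a cycle of length $n-1 \geq 3$. Thus the substance of the argument is simply the observation that simpliciality of $v$ guarantees the chord $xy$ needed to bypass $v$, which is exactly what the clique condition on $N_G(v)$ provides.
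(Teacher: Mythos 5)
Your proof is correct: rerouting a Hamiltonian cycle through the chord $xy$ guaranteed by the simpliciality of $v$ is exactly the intended argument, which the paper omits entirely (it states Proposition \ref{reduce} as a ``simple proposition'' without proof). Your handling of the degenerate triangle case is a reasonable extra precaution, and in the paper's actual use of the proposition (inside the induction of Theorem \ref{P5}, where $|V(G)| \geq 6$) that case never arises.
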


We may now show that Hendry's Conjecture holds for $P_5$-free graphs.

\begin{thm}\label{P5}
Every $P_5$-free Hamiltonian chordal graph is fully cycle extendable.
\end{thm}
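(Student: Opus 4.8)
The plan is to take an arbitrary non-Hamiltonian cycle $C$ in a $P_5$-free Hamiltonian chordal graph $G$ and produce an extension, i.e.\ a vertex $v \notin V(C)$ that can be inserted into $C$. The starting observation is that since $C$ is not Hamiltonian, $V(G) \setminus V(C)$ is nonempty, so there is some component structure to exploit. I would first look at $G - V(C)$ and pick a connected component $Q$; the goal is to find a vertex of $Q$ (or on its boundary) with two consecutive neighbours on $C$, since a vertex $v$ with $x, y$ consecutive on $C$ and $vx, vy \in E(G)$ extends $C$ through $v$. The natural tool is the interaction between the cycle $C$ and the cliques guaranteed by chordality: any minimal separator, or any clique separator as in Proposition \ref{clique}, forces simplicial vertices into the components of $G - X$.

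The key structural step, which I expect is where $P_5$-freeness does the real work, is to control how a vertex $v \notin V(C)$ attaches to $C$. Since $G$ is chordal, consider a chord-free walk along $C$ between two consecutive neighbours of $v$; if $v$ had two neighbours $x,y$ on $C$ that are \emph{not} consecutive, the arc of $C$ strictly between them together with $v$ would have to be triangulated, and chordality forces chords among the $C$-vertices. I would argue that a vertex outside $C$ adjacent to $C$ can be chosen so that, if it fails to have two consecutive $C$-neighbours, then following a shortest path from that vertex into $C$-vertices that are spread apart produces an induced $P_5$ — using that the segment of $C$ between far-apart neighbours, having no short-circuiting insertion point, contains an induced path of length $4$ once combined with $v$ and a neighbour outside. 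This is the crux: translating ``$C$ is not extendable'' into the existence of a long induced path, contradicting $P_5$-freeness.

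Concretely, I would proceed as follows. First, since $C$ is non-Hamiltonian there is a simplicial vertex of $G$ outside $C$ or a vertex of $G - V(C)$ adjacent to $C$; use Proposition \ref{clique} or Proposition \ref{separator} applied to a clique separator derived from $V(C)$ to locate a well-placed simplicial vertex $v$. Second, show $v$ has a neighbour on $C$; let $x_1, x_2, \ldots$ be the neighbours of $v$ on $C$ in cyclic order. Third, suppose for contradiction that no two of these are consecutive on $C$, so between some pair of consecutive-in-the-list neighbours $x_i, x_{i+1}$ there is at least one $C$-vertex $w \notin N(v)$. Fourth, examine the induced subgraph on $v$, $x_i$, $w$, $x_{i+1}$, and one further $C$-vertex beyond $x_{i+1}$ (or a vertex adjacent to $x_i$ off the cycle), and show that chordality plus the nonadjacencies force an induced $P_5$, the contradiction. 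Finally, having secured two consecutive $C$-neighbours of $v$, insert $v$ to obtain the extension; the full-extendability addendum (every vertex lying in a triangle) follows because a simplicial vertex of a graph with at least one edge lies in a triangle.

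The main obstacle will be the fourth step: carefully choosing \emph{which} five vertices form the forbidden $P_5$ and verifying that all the required non-edges hold. The difficulty is that chordality gives chords that could accidentally short-circuit a candidate $P_5$, so the argument must exploit the simpliciality of $v$ (its neighbourhood is a clique) to pin down exactly where chords can and cannot appear, and must use the minimal-separator clique property to control how $Q$ attaches to $C$. I would expect to handle this by a short case analysis on whether the ``bad'' off-cycle structure lies on one side or straddles a neighbour, reducing each case to an explicit induced $P_5$.
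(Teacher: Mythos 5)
There is a genuine gap, and it sits exactly in the step you flag as the crux. Your plan reduces extendability to finding a vertex $v \notin V(C)$ with two \emph{consecutive} neighbours on $C$, and you propose to derive an induced $P_5$ from the assumption that no such vertex exists. That implication is false, because a cycle can be extendable without any such insertion point: an extension is any cycle on $V(C) \cup \{v\}$ and need not respect the cyclic order of $C$. Concretely, let $G$ consist of a clique on $\{x_1,x_2,x_3,x_4\}$ together with a vertex $v$ adjacent to exactly $x_1$ and $x_3$, and let $C = x_1x_2x_3x_4x_1$. Then $G$ is chordal, Hamiltonian (via $x_1vx_3x_2x_4x_1$), and $P_5$-free (it has only five vertices and is not itself a path); the unique vertex outside $C$ has non-consecutive neighbours on $C$; and $G$ contains no induced $P_5$ at all. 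So the hypothesis of your step 3 is consistent with every assumption of the theorem, and no case analysis in step 4 can ever reach the contradiction you want. Note that $C$ \emph{is} extendable here, but only by the reordering cycle $x_1vx_3x_2x_4x_1$, which your insertion framework cannot produce. Chordality in fact works against you in step 4: as you yourself observe, it forces consecutive-in-cyclic-order $C$-neighbours of $v$ to be adjacent, which densely triangulates the picture around $v$ and is precisely why no induced $P_5$ appears there.

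The paper's proof avoids this trap by splitting on whether the attachment set $X \subseteq V(C)$ of a component $Q$ of $G - V(C)$ is a clique, and by running an induction on $|V(G)|$. When $X$ is a clique (the situation of the example above), no insertion argument is possible; instead, Proposition \ref{clique} gives a simplicial vertex $v \in Q$, Proposition \ref{reduce} shows $G - v$ is Hamiltonian, and then either a Hamiltonian cycle of $G$ is itself an extension of $C$ or the induction hypothesis applies to $G - v$. Only when $X$ contains two nonadjacent vertices $x,y$ does $P_5$-freeness enter, and in a different way from your sketch: if both $x$ and $y$ had cycle-neighbours outside $X$, then a shortest $x$--$y$ path through $Q$, prefixed and suffixed by those two cycle-neighbours, would be an induced path on at least five vertices unless one of three specific edges were present, and each of those edges creates a chordless cycle of length at least four, a contradiction; the surviving case yields a vertex of $Q$ adjacent to two consecutive vertices of $C$, i.e., an honest insertion. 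Your proposal is missing both the induction and any mechanism for order-changing extensions, and without them the clique case cannot be handled. (A smaller quibble: the ``fully'' part requires \emph{every} vertex to lie in a triangle, not just a simplicial one; this follows for all vertices from Lemma \ref{common} applied to a Hamiltonian cycle.)
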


\begin{proof}
Let $G$ be a $P_5$-free Hamiltonian chordal graph, and let $C$ be a non-Hamiltonian cycle in $G$.
We prove the statement by induction on $|V(G)|$.  The statement can be easily checked for sufficiently small graphs, say for $|V(G)| \leq 5$.  
Assume that $|V(G)| \geq 6$ and that, for any $P_5$-free Hamiltonian chordal graph $G'$ with $|V(G')| < |V(G)|$, any non-Hamiltonian cycle $C'$ extends in $G'$.
Let $Q$ be a connected component of $G - V(C)$ and let $X \subseteq V(C)$ be those vertices of $C$ with a neighbour in $Q$ (note that $X$ necessarily contains at least $2$ vertices).

If $X$ is a clique, then $Q$ contains a simplicial vertex by Proposition \ref{clique}, say $v$.
By Proposition \ref{reduce}, $G - v$ is Hamiltonian.
Clearly $C$ is a cycle in $G-v$.  If $C$ is Hamiltonian in $G-v$, then a Hamiltonian cycle in $G$ is an extension of $C$ in $G$.  If $C$ is not Hamiltonian in $G-v$, then $C$ extends in $G-v$ by induction and hence also in $G$.

Suppose, then, that there exist $x,y \in X$ which are nonadjacent; let $a$ and $b$ denote the neighbours of $x$ on $C$ and let $c$ and $d$ denote the neighbours of $y$ on $C$.
We will show that $C$ contains an edge whose ends share a neighbour outside of $C$, and hence $C$ easily extends.
  
First, suppose that each of $x$ and $y$ have a cycle neighbour which does not lie in $X$, say $a$ and $c$ (we do not assume that these vertices are distinct).  Since $xy \notin E(G)$, a shortest $xy$-path in $G[V(Q) \cup \{x,y\}]$ has length at least $2$.  Let $P$ be such a path, and consider the subgraph of $G$ induced by $V(P) \cup \{a,c\}$.  If $a = c$, then $ax + P + ya$ is a cycle of length at least $4$ in $G$.  However, since $a$ has no neighbours in $Q$ and $P$ is minimal, this cycle is chordless, a contradiction.  Thus, we assume that $a \neq c$.  The only possible edges induced by the vertices of the path $ax + P + yc$ (other than those in the path themselves) are $ay, ac,$ and $xc$.  Since $G$ is $P_5$ free, at least one such edge must be present, however any combination creates a chordless cycle of length at least $4$, a contradiction.

We now may assume that one of $x$ or $y$ has both of its cycle neighbors in $X$.  Without loss of generality, we may assume that $a \in X$.  Let $Y_a = V(Q) \cap N_G(a)$ and $Y_x = V(Q) \cap N_G(x)$.  Suppose that $Y_a \cap Y_x = \emptyset$.  Out of all paths connecting some vertex in $Y_a$ to some vertex in $Y_x$, let $P$ be one of minimum length; say that $P$ joins $s \in Y_a$ and $t \in Y_x$.  By construction, no internal vertex of $P$ is adjacent to either $a$ or $x$ in $G$, and hence $P + txas$ is an induced cycle of length at least $4$, a contradiction.  It follows that $Y_a \cap Y_x$ is nonempty.  If $t \in Y_a \cap Y_x$, then $C' = C - ax + atx$ is our desired extension of $C$ in $G$. \qquad
\end{proof}

\subsection{Bull-free chordal graphs}\label{bullfree}

Bull-free chordal graphs are of interest to us for two reasons.  The first is that bull-free graphs are historically tied to the study of perfect graphs, of which chordal graphs form a well-known subclass.  The second is that there is some evidence that Hendry's Conjecture may hold for strongly chordal graphs (see \cite{ABS13, AS06,CFGJ06}), and bull-free chordal graphs are strongly chordal.  To see this, recall that a graph is strongly chordal if and only if it is chordal and sun-free.  Since every sun contains a bull, any bull-free chordal graph is sun-free, and hence strongly chordal.
%


While we cannot yet show that bull-free Hamiltonian chordal graphs are fully cycle extendable, we can show that $\{\text{bull}, X\}$-free Hamiltonian chordal graphs are 
fully cycle extendable for two reasonably large subgraphs $X$.  To do this, we need the following simple observation:

\begin{lem}\label{common}
If $C$ is a cycle in a chordal graph and $uv \in E(C)$, then $u$ and $v$ share a common neighbour on $C$.
\end{lem}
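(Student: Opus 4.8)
The plan is to induct on the length of the cycle $C$, or more cleanly, to work with the chord structure of $C$ directly using chordality. Let $C = v_0 v_1 \cdots v_{k-1} v_0$ be a cycle in the chordal graph $G$ and fix an edge $uv \in E(C)$, say $u = v_0$ and $v = v_1$. I want to produce a vertex $w \in V(C)$ adjacent to both $u$ and $v$. The key observation is that if $C$ has length $3$ the third vertex works immediately, so assume $|V(C)| \geq 4$.

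The main idea is that the cycle $C$, viewed as an induced structure, must have a chord since $G$ is chordal and $C$ is not an induced cycle of length $\geq 4$ (by definition of chordal, any cycle of length at least $4$ has a chord). First I would argue that we can reduce to a smaller cycle still containing the edge $uv$. Concretely, take any chord $v_i v_j$ of $C$; this chord splits $C$ into two shorter cycles, exactly one of which still contains the edge $uv$. That shorter cycle is again a cycle in the chordal graph $G$ containing $uv$, so by induction on cycle length it has a vertex adjacent to both $u$ and $v$, and that vertex lies on $C$ as well. The base case is the triangle, where the apex is the common neighbour.

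I would set this up as an induction on $\ell = |V(C)| \geq 3$. For the inductive step with $\ell \geq 4$: since $G$ is chordal and $C$ is a cycle of length $\geq 4$, $C$ has a chord $e = v_i v_j$ (with $v_i, v_j$ nonconsecutive on $C$). This chord partitions $V(C)$ into two arcs, and together with $e$ yields two cycles $C_1, C_2$, each strictly shorter than $C$, with $E(C_1) \cup E(C_2) = E(C) \cup \{e\}$ and $E(C_1) \cap E(C_2) = \{e\}$. The edge $uv$, being an edge of $C$ distinct from the new chord $e$, belongs to exactly one of $C_1, C_2$. Applying the induction hypothesis to that cycle (which is a genuine cycle in $G$, shorter than $C$) produces a vertex $w$ adjacent to both $u$ and $v$; since $V(C_1), V(C_2) \subseteq V(C)$, we have $w \in V(C)$, as required.

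The only point requiring a little care — and the step I expect to be the mildest obstacle — is verifying that the chord genuinely produces two strictly shorter cycles each of which is a valid cycle in $G$, and that the edge $uv$ is inherited by one of them rather than being destroyed. This is purely combinatorial bookkeeping on the cyclic order: writing out the two arcs between $v_i$ and $v_j$ and checking that each has length at least $2$ (so that appending the chord gives a cycle of length $\geq 3$), and that $uv$ survives in whichever arc contains it. There are no analytic difficulties; the argument is a clean structural induction driven entirely by the defining property of chordal graphs.
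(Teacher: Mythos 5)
Your proof is correct and is essentially the paper's argument in inductive clothing: the paper takes a shortest cycle through $uv$ in $G[V(C)]$ and notes it must be a triangle, which implicitly uses exactly your key step (a chord of any longer cycle splits off a strictly shorter cycle still containing $uv$). The extremal phrasing and your induction on cycle length are the same mechanism, so there is nothing substantive to add.
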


\begin{proof}
There is a shortest cycle through $uv$ in $G[V(C)]$.  Since $G[V(C)]$ is chordal, this cycle must have length $3$; the vertex in this $3$-cycle distinct from $u$ and $v$ is the desired common neighbour. \qquad
\end{proof}

\begin{thm}\label{bullplus}
If $G$ is a $\{\text{bull}, K_{1,5}\}$-free or $\{\text{bull},  K_2 \vee P_5\}$-free Hamiltonian chordal graph, then $G$ is fully cycle extendable.
\end{thm}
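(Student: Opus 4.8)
The plan is to prove the contrapositive-flavoured statement by contradiction: assume $G$ is a Hamiltonian chordal graph with a non-Hamiltonian cycle $C$ that does not extend, and derive an induced bull together with either an induced $K_{1,5}$ or an induced $K_2 \vee P_5$. As in the proof of Theorem \ref{P5}, I would let $Q$ be a connected component of $G - V(C)$ and let $X \subseteq V(C)$ denote the attachment set, i.e.\ those vertices of $C$ having a neighbour in $Q$; necessarily $|X| \geq 2$. The first reduction is the same as before: if $X$ were a clique, Proposition \ref{clique} would yield a simplicial vertex $v$ of $G$ in $Q$, and then Propositions \ref{reduce} plus an induction on $|V(G)|$ would extend $C$, contradicting non-extendability. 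So I may assume $X$ is not a clique, and fix nonadjacent $x, y \in X$.

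The heart of the argument is to exploit the structure forced by the nonadjacent pair $x,y$ with common reach into $Q$. Using Lemma \ref{common}, each edge of $C$ incident to $x$ (say $ax$ and $xb$) provides a common cycle-neighbour, giving me triangles along $C$ that I can combine with a short $xy$-path through $Q$. The key configuration to produce is a vertex set inducing a bull: taking a shortest $x$--$y$ path $P$ in $G[V(Q) \cup \{x,y\}]$ (which has length at least $2$ since $xy \notin E(G)$) and attaching the cycle neighbours of $x$ and $y$, the non-edges forced by minimality of $P$ and chordality should pin down a bull on the ``horns'' $x,y$ and their private cycle neighbours over a central triangle. I would run essentially the case analysis of Theorem \ref{P5} — distinguishing whether $x$ and $y$ each have a cycle neighbour outside $X$ — but now, instead of invoking $P_5$-freeness to kill the configuration, I use bull-freeness to force additional edges, and then show those edges build up a large star $K_{1,5}$ (too many neighbours of a single vertex along $C$ and into $Q$) or a $K_2 \vee P_5$ (an edge dominated by a long induced path).

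The main obstacle I anticipate is the bookkeeping that converts ``bull-free plus non-extendable'' into a \emph{specific} forbidden subgraph of the right size. Bull-freeness on its own only adds one chord at a time; the work is to show that iterating this — each newly forced edge creates another bull unless yet another edge is present — eventually saturates enough of the neighbourhood to expose $K_{1,5}$ (in the high-degree branch) or to string together five consecutive induced vertices joined to a common edge, yielding $K_2 \vee P_5$ (in the long-path branch). Controlling which of the two forbidden graphs appears, and ensuring the vertices chosen are genuinely \emph{induced} with no stray chords (here chordality of $G[V(C)]$ and the minimality of the chosen paths do the policing), is where the care lies; the two hypotheses should split precisely according to whether the obstruction is ``wide'' or ``long.'' Throughout, Lemma \ref{common} supplies the triangles that make the bull's body, and Proposition \ref{clique} handles the clique case so that a nonadjacent attachment pair is always available.
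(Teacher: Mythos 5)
Your proposal has two genuine gaps. The first is structural: you plan to show that the non-extendable configuration forces ``either an induced $K_{1,5}$ or an induced $K_2 \vee P_5$,'' with the two hypotheses splitting ``according to whether the obstruction is wide or long.'' That disjunction does not prove the theorem. Under the hypothesis that $G$ is $\{\text{bull}, K_{1,5}\}$-free, a branch of your analysis that produces only a $K_2 \vee P_5$ yields no contradiction, and symmetrically for the other hypothesis. The paper resolves exactly this point by proving something stronger: a minimal bull-free Hamiltonian chordal counterexample must contain \emph{both} an induced $K_{1,5}$ \emph{and} an induced $K_2 \vee P_5$, so that either additional hypothesis is contradicted in every case. (Note also that producing ``an induced bull'' outright, as your first paragraph suggests, cannot be the goal: bull-freeness is an assumption to be exploited, and whether bull-freeness alone suffices is precisely the open question the paper leaves unresolved.)

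The second gap is that the engine of the proof is missing, and your chosen configuration lacks the leverage needed to build it. You transplant the setup of Theorem \ref{P5} (component $Q$, attachment set $X$, a nonadjacent pair $x,y \in X$, a shortest $x$--$y$ path through $Q$), but shortest-path minimality only yields non-edges among the path's vertices; bull-freeness forces a new edge only where a triangle with two pendant edges already exists, and nothing in your sketch produces the precise pattern of adjacencies and non-adjacencies needed to exhibit a six-vertex induced star or a seven-vertex induced $K_2 \vee P_5$. The paper instead takes a vertex-minimal counterexample $G$ and works with a shortest segment $Z = uz_1\cdots z_k v$ of a \emph{Hamiltonian cycle} $C^*$ whose interior avoids $C$. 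This is the key device: internal vertices of $Z$ can be deleted and $C^*$ rerouted so that the graph stays Hamiltonian, whence minimality of $G$ forces $C$ to extend --- a contradiction that establishes all the crucial non-edges ($uv \notin E(G)$, no chords inside $Z$, $u$ nonadjacent to $z_2,\dots,z_k$, and so on). Moreover, the paper's $x$ and $y$ are the extreme $C$-neighbours of $u$, shown via Lemma \ref{common} to be \emph{adjacent} --- the opposite of your nonadjacent pair --- and then shown to be complete to the interior of $Z$; it is this configuration, pushed through a long cascade of alternating bull-freeness and chordality forcing steps, that finally pins down both forbidden subgraphs. Your plan names the right tools (Lemma \ref{common}, bull forcing) but defers the entire cascade to ``bookkeeping,'' and since your path through $Q$ carries no Hamiltonicity-preservation property, the deletion-plus-minimality arguments that drive the paper's cascade are simply unavailable in your setup.
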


\begin{proof}
Let $X$ be either $K_{1,5}$ or $K_2 \vee P_5$.
Let $G$ be a minimal  $\{\text{bull},  X\}$-free Hamiltonian chordal graph which is not cycle extendable, and let $C$ be a non-Hamiltonian cycle which is not extendable.  By only making use of the fact that $G$ is Hamiltonian, bull-free, and chordal, we will show that $G$ necessarily contains both $K_{1,5}$ and $K_2 \vee P_5$, a contradiction.

Consider the vertices of $C$ in some cyclic order.  
For a vertex $a \in V(C)$, we denote by $a^-$ and $a^+$ the vertices immediately preceding and succeeding $a$ along $C$, respectively.
For two vertices $a,b \in V(C)$, let $C[a,b]$ denote the segment of $C$ from $a$ to $b$ with respect to the cyclic ordering (that is, containing $a^+$ and $b^-$).

Let $C^*$ be a Hamiltonian cycle of $G$.  There must be a segment of $C^*$ with at least $3$ vertices whose ends lie on $C$ and whose internal vertices are disjoint from $V(C)$.  Choose $Z = uz_1\cdots z_kv$ to be a shortest such segment; for notation purposes let $z_0 = u$ and $z_{k+1} = v$, and let $\hat{Z}$ denote the internal vertices of $Z$.
We now argue the presence or absence of edges in the induced subgraph of $G$ having vertex set $V(C) \cup V(Z)$.  Figure \ref{edgesin} displays the edges which we argue are present.

We first argue that $uv \notin E(G)$.  Suppose, to the contrary, that $uv \in E(G)$.  This implies that  $uv + Z$ is a cycle in $G$ and so, by Lemma \ref{common}, $u$ and $v$ share a common neighbour on $Z$, say $z$.
If $\hat{Z} = \{z\}$, then $G - z$ is Hamiltonian (we replace the segment $uzv$ in $C^*$ with $uv$) and we have that $C$ is extendable in $G - z$ by the minimality of $G$.  If $|\hat{Z}| \geq 2$, then consider the graph $G' = G - \left(\hat{Z} \setminus \{z\}\right)$.  This graph is still Hamiltonian (we replace $Z$ in $C^*$ with $uzv$), so $C$ extends in $G'$ by the minimality of $G$, and hence also extends in $G$.  Thus, $uv \notin E(G)$.

Now, we note that $u^-$ and $u^+$ must be non-adjacent to $z_1$ and that $v^-$ and $v^+$ must be non-adjacent to $z_k$, otherwise $C$ is extendable.
We also may deduce that $u$ is not adjacent to any of $z_2, \ldots, z_k$, $v$ is not adjacent to any of $z_1, \ldots, z_{k-1}$, and no edge connects two vertices of $\hat{Z}$ except those edges of $Z$ itself.  If any of these were not the case, then there are vertices of $Z$ that may be deleted that maintain Hamiltonicity, and $C$ then extends by the minimality argument.

Among all vertices on $C[u,v]$, let $x$ be the neighbour of $u$ that is closest to $v$.  Similarly, let $y$ denote the vertex on $C[v,u]$ that is the neighbour of $u$ closest to $v$.  By Lemma \ref{common}, $u$ and $x$ must share a common neighbour on $C[x,y] \cup yux$.  Since $u$ is adjacent only to $x$ and $y$ on this cycle, this common neighbour must by $y$ and so $xy \in E(G)$.

We now show that $z_ix, z_iy \in E(G)$ and $z_ix^-, z_ix^+, z_iy^-, z_iy^+ \notin E(G)$ for every $i = 1, \ldots, k$.
Consider the cycle $Z \cup ux \cup C[x,v]$.  Note that the only neighbours of $u$ on this cycle are $x$ and $z_1$.  By Lemma \ref{common}, $u$ and $x$ must share a common neighbour on this cycle, and hence $z_1x \in E(G)$.  The non-extendability of $C$ implies that $x^-z_1,x^+z_1 \notin E(G)$.  Note that we may now deduce that $x$ is distinct from both $u^+$ and $v^-$.
Now consider the subgraph induced by $\{u, x, x^+, z_1, z_2\}$.  The triangle $uxz_1$ together with the edges $z_1z_2$ and $xx^+$ form a bull.  We have already deduced that $ux^+, uz_2, x^+z_1 \notin E(G)$; since $G$ is chordal and bull-free, we must have that $xz_2 \in E(G)$.  It follows that $x^-z_2, x^+z_2 \notin E(G)$.  We then iterate this argument with the triangle $z_iz_{i+1}x$ and the edges $xx^+$ and $z_{i+1}z_{i+2}$ to argue that $x$ is complete to $\hat{Z}$ and that $x^-$ and $x^+$ are anticomplete to $\hat{Z}$.  Now, if $x^+ = v^-$, then $xv \in E(G)$, since $\{x,v^-, v, z_k\}$ cannot induce a $4$-cycle and $v^-z_k \notin E(G)$.  If $x^+ \neq v^-$, then since $\{x, x^+, z_{k-1}, z_k, v\}$ cannot induce a bull, we must also have $xv \in E(G)$.
An identical argument gives that $y$ is complete to $\hat{Z} \cup \{v\}$, that $y$ is distinct from $u^-$ and $v^+$, and that $y^-$ and $y^+$ are anticomplete to $\hat{Z}$.
Since $G$ is chordal and $u^-z_1, u^+z_1, v^-z_k, v^+z_k \notin E(G)$, it is also easy to deduce that $\{u^-,u^+, v^-, v^+\}$ is anticomplete to $\hat{Z}$.

We now show that $\{u^-, u, u^+\}$ is anticomplete to $\{v^-, v, v^+\}$, that the vertices in $\{u^+, x^+, v^+, y^+\}$ are pairwise non-adjacent, and that the vertices in $\{u^-, x^-, v^-, y^-\}$, are pairwise non-adjacent.
If $uv^- \in E(G)$, then $Z \cup vv^-u$ would be a chordless cycle of length at least $4$, a contradiction.  An identical argument holds for $uv^+, vu^-, $ and $vu^+$.
Now, if any of $u^+x^+, x^+v^+, v^+y^+$ or $y^+u^+$ are edges of $G$, then $C$ easily extends (for instance, $u^+x^+ \in E(G)$ gives the extension $C - uu^+ - xx^+ + u^+x^+ + uz_1x$).  Hence, $u^+x^+, x^+v^+, v^+y^+,y^+u^+ \notin E(G)$.  If $x^+y^+ \in E(G)$, then $C$ extends to $C - xx^+ - yy^+ + x^+y^+ + xz_1y$, a contradiction.  Also, if any edge of $G$ has one end in $\{u^-, u^+\}$ and the other in $\{v^-,v^+\}$, then $G$ contains an induced cycle of length at least $5$, a contradiction.  Thus, $\{u^-, u, u^+\}$ is anticomplete to $\{v^-, v, v^+\}$ and the vertices in $\{u^+, x^+, v^+, y^+\}$ are pairwise non-adjacent.  An identical argument gives that the vertices of $\{u^-, x^-, v^-, y^-\}$ are pairwise non-adjacent.

\begin{figure}[h!]
\begin{center}
\scalebox{0.4}{
\definecolor{qqqqff}{rgb}{0,0,1}
\definecolor{ffqqqq}{rgb}{1,0,0}
\begin{tikzpicture}
\clip(-6.22,-9.1) rectangle (18,13.16);

\draw [line width=5pt,color=black] (-4,2)-- (2,2);
\draw [line width=5pt,color= black] (2,2)-- (4.5,2);
\draw [line width=2pt,color= black, dotted] (4.5,2)-- (10,2);
\draw [line width=5pt,color= black] (10,2)-- (16,2);

\draw [line width=5pt,color= black] (-4,2)-- (-3,6);
\draw [line width=5pt,color= black] (-4,2)-- (-3,-2);
\draw [line width=2pt,color= black, dotted] (-3,6) to [out=60, in=210] (2,11);
\draw [line width=5pt,color= black] (2,11)-- (6,12);
\draw [line width=5pt,color= black] (6,12)-- (10,11);
\draw [line width=2pt,color= black, dotted] (10,11) to [out=330, in=120] (15,6);
\draw [line width=5pt,color= black] (15,6)-- (16,2);
\draw [line width=5pt,color= black] (16,2)-- (15,-2);
\draw [line width=2pt,color= black, dotted] (15,-2) to [out=240, in= 30](10,-7);
\draw [line width=5pt,color= black] (10,-7)-- (6,-8);
\draw [line width=5pt,color= black] (6,-8)-- (2,-7);
\draw [line width=2pt,color= black, dotted] (2,-7) to [out=150, in=300] (-3,-2);


\draw [line width=2pt] (6,12)-- (6,-8);
\draw [line width=2pt] (2,2)-- (6,12);
\draw [line width=2pt] (6,-8)-- (2,2);
\draw [line width=2pt] (16,2)-- (6,12);
\draw [line width=2pt] (15,6)-- (6,12);
\draw [line width=2pt] (16,2)-- (6,-8);
\draw [line width=2pt] (-4,2)-- (6,12);
\draw [line width=2pt] (6,-8)-- (-4,2);
\draw [line width=2pt] (6,12)-- (-3,-2);
\draw [line width=2pt] (6,12)-- (-3,6);
\draw [line width=2pt] (6,-8)-- (-3,6);
\draw [line width=2pt] (6,-8)-- (-3,-2);
\draw [line width=2pt] (15,-2)-- (6,12);
\draw [line width=2pt] (6,-8)-- (15,6);
\draw [line width=2pt] (10,2)-- (6,12);
\draw [line width=2pt] (10,2)-- (6,-8);
\draw [line width=2pt] (6,12)-- (4.5,2);
\draw [line width=2pt] (6,-8)-- (4.5,2);

\begin{Huge}

\fill [color=black] (-4,2) circle (6pt);
\draw[color=black] (-4.8,2) node[anchor=center] {$u$};
\fill [color=black] (-3,6) circle (6pt);
\draw[color=black] (-3.7,6.7) node[anchor=center] {$u^+$};
\fill [color=black] (-3,-2) circle (6pt);
\draw[color=black] (-3.7,-2.5) node[anchor=center] {$u^-$};
\fill [color=black] (16,2) circle (6pt);
\draw[color=black] (16.8,2) node[anchor=center] {$v$};
\fill [color=black] (6,12) circle (6pt);
\draw[color=black] (6.27,12.58) node[anchor=center] {$x$};
\fill [color=black] (6,-8) circle (6pt);
\draw[color=black] (6,-8.75) node[anchor=center] {$y$};
\fill [color=black] (2,11) circle (6pt);
\draw[color=black] (1.89,11.9) node[anchor=center] {$x^-$};
\fill [color=black] (10,11) circle (6pt);
\draw[color=black] (10.55,11.9) node[anchor=center] {$x^+$};
\fill [color=black] (15,6) circle (6pt);
\draw[color=black] (16,6.59) node[anchor=center] {$v^-$};
\fill [color=black] (15,-2) circle (6pt);
\draw[color=black] (16,-2.01) node[anchor=center] {$v^+$};
\fill [color=black] (10,-7) circle (6pt);
\draw[color=black] (10.5,-7.6) node[anchor=center] {$y^-$};
\fill [color=black] (2,-7) circle (6pt);
\draw[color=black] (1.6,-7.8) node[anchor=center] {$y^+$};

\fill [color=black] (2,2) circle (6pt);
\draw[color=black] (1.5,2.6) node[anchor=center] {$z_1$};
\fill [color=black] (4.5,2) circle (6pt);
\draw[color=black] (4,2.6) node[anchor=center] {$z_2$};
\fill [color=black] (10,2) circle (6pt);
\draw[color=black] (10.4,2.6) node[anchor=center] {$z_k$};
%

\end{Huge}
\end{tikzpicture}
}

\end{center}
\caption{Edges of a subgraph in $G$ as described in the proof of Theorem \ref{bullplus}}\label{edgesin}
\end{figure}
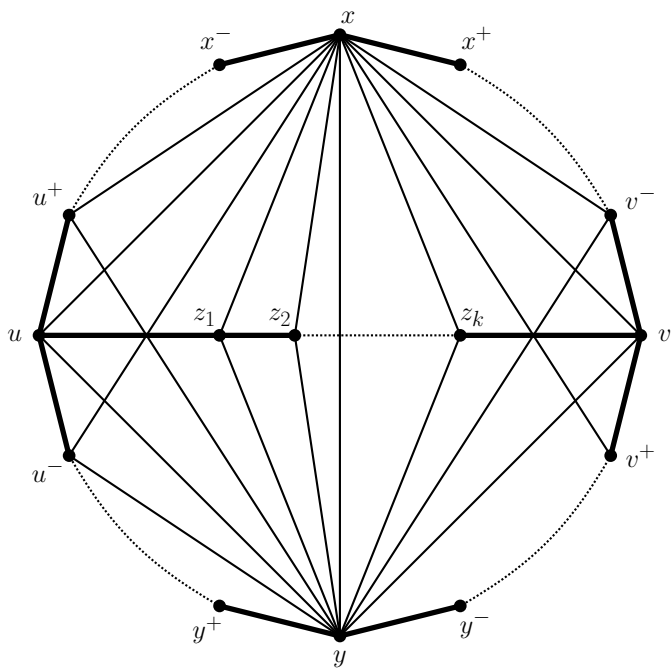

We claim that $x$ is adjacent to $u^-, u^+, v^+$ and that $y$ is adjacent to $u^-, u^+, v^-$.
We begin with $x$ and $u^+$.  If $u^+ = x^-$, then $\{u, u^+, x, z_1\}$ is a $4$-cycle and $u^+x$ is the only possible chord.  Otherwise, the triangle $uxz_1$ and the edges $uu^+$ and $xx^+$ form a bull, and $ux^+, z_1x^+,u^+z_1, u^+x^+$ are non-edges.  Hence, we must have $u^+x \in E(G)$.  An identical argument for the set of vertices $\{u, u^-, x, x^+, z_1\}$ shows that $xu^- \in E(G)$.  
To show that $xv^+ \in E(G)$ requires a little more work.  We first note that we have a bull consisting of the triangle $xz_kv$ and the edges $xx^+$ and $vv^+$.  We have shown that $x^+$, $v^+$, and $z_k$ are mutually non-adjacent, and so one of $x^+v$ or $xv^+$ must be an edge.  If $xv^+ \in E(G)$, then we are done.  Suppose that $x^+v \in E(G)$.  We now consider the bull with triangle $xx^+v$ and pendant edges $ux$ and $vv^+$.  The only possible edge which has not yet been ruled out is $xv^+$, and so it must be an edge in this case as well.
Symmetric arguments gives that $y$ is adjacent to $u^-, u^+, v^-$.

Consider the bull with triangle $z_1xy$ and pendant edges $xv^+$ and $yv^-$.  The edges $v^-z_1$ and $v^+z_1$ are forbidden, and so one of the remaining three edges ($xv^-, yv^+, v^-v^+$) must be present.  Regardless of the presence of $v^-v^+$, one of $xv^-, yv^+$ must be an edge since $G$ is chordal.  We may say without loss of generality (by symmetry) that $xv^- \in E(G)$.
Finally, we consider the bull with triangle $z_1xy$ and pendant edges $xx^-$ and $yy^-$.  Since $x^-$, $y^-$, and $z_1$ are pairwise non-adjacent, we must have that one of $xy^-$ or $yx^-$ is an edge of $G$.

We now see that the following subgraphs are induced in $G$:
	\begin{compactitem}
	\item $K_{1,5}$: $\{x,u^-,x^-,v^-,y^-,z_1\}$ if $xy^- \in E(G)$ and $\{y,u^-,x^-,v^-,y^-,z_1\}$ if $yx^- \in E(G)$;
	\item $K_2 \vee P_5$: $\{x,y,u^-,u,z_1,v,v^-\}$ if $Z$ has one internal vertex and $\{x,y,u^-,u,z_1,z_2,z_3\}$ otherwise (recall that $v = z_{k+1}$).
	\end{compactitem}
	Hence we have arrived at our desired contradiction. \qquad
\end{proof}
 
\section{Future work}\label{open}

As we have mentioned, many of the classes for which Hendry's Conjecture is known to hold are strongly chordal---interval graphs (shown to be strongly chordal in \cite{BLS99}), strongly chordal graphs which are also either $(K_{1,4}+e)$-free or hourglass-free (stated on page \pageref{strong}), and $\{\text{bull}, K_{1,5}\}$-free and $\{\text{bull},  K_2 \vee P_5\}$-free graphs (Corollary \ref{bullplus}).
Furthermore, no counterexample to Hendry's Conjecture which was constructed in Section \ref{counter} is strongly chordal.  To see this, let $H^{+}$ be the graph obtained from $H$ (Figure \ref{base}) by joining a vertex $x$ to the vertices $g$ and $h$ (see Figure \ref{H+}).
\begin{figure}[h!]
\begin{center}
\scalebox{0.4}{
\begin{tikzpicture}
\clip(3.2,-8) rectangle (15.21,3.8);
\draw (4,-2)-- (12.5,1.5);
\draw [line width = 6pt] (12.5,-5.5)-- (9,-7);
\draw (9,3)-- (5.5,1.5);
\draw (4,-2)-- (14,-2);
\draw [line width = 6pt] (4,-2)-- (12.5,-5.5);
\draw [line width = 6pt] (4,-2)-- (9,-7);
\draw [line width = 6pt] (4,-2)-- (9,-3);
\draw (4,-2)-- (9,3);
\draw [line width = 6pt] (4,-2)-- (5.5,1.5);
\draw (4,-2)-- (5.5,-5.5);
\draw (4,-2)-- (7.25,-6.25);
\draw  (12.5,1.5)-- (14,-2);
\draw (12.5,1.5)-- (9,-3);
\draw (12.5,1.5)-- (9,3);
\draw  (14,-2)-- (12.5,-5.5);
\draw (14,-2)-- (9,-7);
\draw (14,-2)-- (9,-3);
\draw (14,-2)-- (9,3);
\draw (14,-2)-- (5.5,1.5);
\draw (5.5,-5.5)-- (14,-2);
\draw (7.25,-6.25)-- (14,-2);
\draw  [line width = 6pt] (9,-7)-- (9,-3);
\draw (7.25,-6.25)-- (9,-7);
\draw  (9,-3)-- (9,3);
\draw [line width = 6pt] (9,-3)-- (5.5,1.5);
\draw (9,-3)-- (7.25,-6.25);
\draw (9,-3)-- (5.5,-5.5);
\draw (5.5,1.5)-- (5.5,-5.5);
\draw (5.5,-5.5)-- (7.25,-6.25);
\draw  [line width = 6pt] (9.7,-5.5)-- (9,-3);
\draw  [line width = 6pt] (9,-7)-- (9.7,-5.5);
\fill [color=black] (4,-2) circle (6.0pt);
\draw[color=black] (3.5,-2) node {\LARGE $a$};
\fill [color=black] (9.7,-5.5) circle (6.0pt);
\draw[color=black] (10.2,-5.3) node {\LARGE $x$};
\fill [color=black] (12.5,1.5) circle (6.0pt);
\draw[color=black] (12.8,2) node {\LARGE $d$};
\fill [color=black] (14,-2) circle (6.0pt);
\draw[color=black] (14.5,-2) node {\LARGE $e$};
\fill [color=black] (9,-7) circle (6.0pt);
\draw[color=black] (9,-7.5) node {\LARGE $g$};
\fill [color=black] (9,-3) circle (6.0pt);
\draw[color=black] (9.5,-3.3) node {\LARGE $h$};
\fill [color=black] (9,3) circle (6.0pt);
\draw[color=black] (9,3.5) node {\LARGE $c$};
\fill [color=black] (5.5,1.5) circle (6.0pt);
\draw[color=black] (5.2,2) node {\LARGE $b$};
\fill [color=black] (12.5,-5.5) circle (6.0pt);
\draw[color=black] (12.8,-6) node {\LARGE $f$};
\fill [color=black] (5.5,-5.5) circle (6.0pt);
\draw[color=black] (5.2,-6) node {\LARGE $z_1$};
\fill [color=black] (7.25,-6.25) circle (6.0pt);
\draw[color=black] (7.25,-6.9) node {\LARGE $z_2$};
\end{tikzpicture}
}
\caption{A $3$-sun in $H^{+}$}\label{H+}
\end{center}
\end{figure}
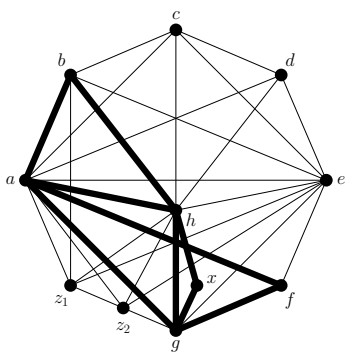
The vertices $\{a,b,f,g,h,x\}$ induce a $3$-sun in $H^{+}$.  Since $H^{+}$ is an induced subgraph of every one of the graphs constructed in Theorems \ref{sizecounter} and \ref{ratiocounter}, each such counterexample contains a sun and is thus {\em not} strongly chordal.
As such, we pose the following question:

\begin{ques}
Is every Hamiltonian strongly chordal graph fully cycle extendable?
\end{ques}

In Theorem \ref{P5}, we showed that being $P_5$-free is a sufficient condition for a Hamiltonian chordal graph to be fully cycle extendable.  However, if we take a closer look at the construction method given in Section \ref{counter}, we see that there exists a counterexample to Hendry's Conjecture that is $P_{10}$-free.  Consider again the base graph $H$ given in Figure 1.  Note that $a$ and $e$ are universal vertices, so any induced path of $H$ of length $4$ or greater contains neither vertex; Figure \ref{Hagain} shows $H$ with the edges incident to $a$ and $e$ removed.

\begin{figure}[h!]
\begin{center}
\scalebox{0.4}{
\begin{tikzpicture}
\clip(3.2,-8) rectangle (15.21,3.8);

\draw [dotted] (4,-2)-- (12.5,1.5);
\draw [dotted]  (4,-2)-- (14,-2);
\draw [dotted] (4,-2)-- (12.5,-5.5);
\draw [dotted] (4,-2)-- (9,-7);
\draw [dotted] (4,-2)-- (9,-3);
\draw [dotted] (4,-2)-- (9,3);
\draw [dotted] (4,-2)-- (5.5,1.5);
\draw [dotted] (4,-2)-- (5.5,-5.5);
\draw [dotted] (4,-2)-- (7.25,-6.25);
\draw [dotted] (12.5,1.5)-- (14,-2);
\draw [dotted]  (14,-2)-- (12.5,-5.5);
\draw [dotted] (14,-2)-- (9,-7);
\draw [dotted] (14,-2)-- (9,-3);
\draw [dotted] (14,-2)-- (9,3);
\draw [dotted] (14,-2)-- (5.5,1.5);
\draw [dotted] (5.5,-5.5)-- (14,-2);
\draw [dotted] (7.25,-6.25)-- (14,-2);

\draw (12.5,-5.5)-- (9,-7);
\draw (9,3)-- (5.5,1.5);
\draw (12.5,1.5)-- (9,-3);
\draw (12.5,1.5)-- (9,3);
\draw (9,-7)-- (9,-3);
\draw (7.25,-6.25)-- (9,-7);
\draw (9,-3)-- (9,3);
\draw (9,-3)-- (5.5,1.5);
\draw (9,-3)-- (7.25,-6.25);
\draw (9,-3)-- (5.5,-5.5);
\draw (5.5,1.5)-- (5.5,-5.5);
\draw (5.5,-5.5)-- (7.25,-6.25);
\fill [color=black] (4,-2) circle (6.0pt);
\draw[color=black] (3.5,-2) node {\LARGE $a$};
\fill [color=black] (12.5,1.5) circle (6.0pt);
\draw[color=black] (12.8,2) node {\LARGE $d$};
\fill [color=black] (14,-2) circle (6.0pt);
\draw[color=black] (14.5,-2) node {\LARGE $e$};
\fill [color=black] (9,-7) circle (6.0pt);
\draw[color=black] (9,-7.5) node {\LARGE $g$};
\fill [color=black] (9,-3) circle (6.0pt);
\draw[color=black] (9.5,-3.3) node {\LARGE $h$};
\fill [color=black] (9,3) circle (6.0pt);
\draw[color=black] (9,3.5) node {\LARGE $c$};
\fill [color=black] (5.5,1.5) circle (6.0pt);
\draw[color=black] (5.2,2) node {\LARGE $b$};
\fill [color=black] (12.5,-5.5) circle (6.0pt);
\draw[color=black] (12.8,-6) node {\LARGE $f$};
\fill [color=black] (5.5,-5.5) circle (6.0pt);
\draw[color=black] (5.2,-6) node {\LARGE $z_1$};
\fill [color=black] (7.25,-6.25) circle (6.0pt);
\draw[color=black] (7.25,-6.9) node {\LARGE $z_2$};
\end{tikzpicture}
}
\caption{$H$ without the edges incident to $a$ or $e$}\label{Hagain}
\end{center}
\end{figure}
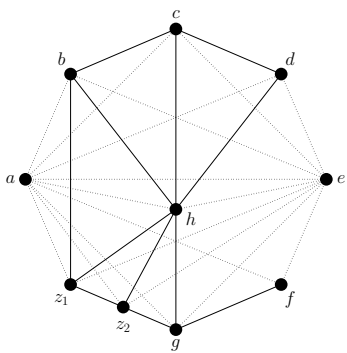

We can now see that $H - \{a,e,h\}$ is a path, and hence an induced path of $H$ on $7$ vertices.  Call this path $P$.  Let $H^*$ be the graph obtained from $H$ by pasting a triangle onto each heavy edge of $H$.  Clearly $H^*$ contains $P$ as an induced path, and each end of $P$ will be incident to a degree $2$ vertex in $H^*$.  We can thus extend $P$ to an induced path on $9$ vertices, and this is the longest such path in $H^*$.  Hence, not every $P_{10}$-free Hamiltonian chordal graph is fully cycle extendable.  We thus ask the following:

\begin{ques}\label{paths}
What is the largest integer $r$ for which every $P_r$-free Hamiltonian chordal graph is fully cycle extendable?
\end{ques}

Based on the discussion above and Theorem \ref{P5}, the answer to Question \ref{paths} is at least $5$ and at most $9$.

It is also worth noting that every counterexample given in Section \ref{counter} has a vertex cut of size $2$; this prompts the following question:

\begin{ques}
Does there exist a value $k>2$ such that every $k$-connected Hamiltonian chordal graph is fully cycle extendable?
\end{ques}

The toughness of a graph, $\tau(G)$, is the minimum value of $\frac{|X|}{c(G-X)}$ taken over all vertex cuts $X$, where $c(G-X)$ denotes the number of connected components of $G-X$.  A graph is {\em $t$-tough} if $\tau(G) \geq t$.  Note that every $t$-tough graph is $2t$-connected and that every Hamiltonian graph is $1$-tough.  As such, we immediately see that every counterexample given in Section \ref{counter} is $1$-tough, which prompts the following, more restrictive, question:

\begin{ques}
Does there exist a value $t>1$ such that every $t$-tough Hamiltonian chordal graph is fully cycle extendable?
\end{ques}

\section{Acknowledgements}

The authors thank Ge{\v n}a Hahn for the stimulating discussions.
We are also grateful for the careful reading of this paper by the referees and for their helpful comments.

\bibliographystyle{siam}
\bibliography{references}

\end{document}